\date{\today}
\author{
{\large Fabio Nobile\textsuperscript{a}, Raúl Tempone\textsuperscript{b}, Sören Wolfers\textsuperscript{b}\footnote{Corresponding author. Email address: \texttt{soeren.wolfers@kaust.edu.sa}}}\\
{\small \textsuperscript{a} École Polytechnique Fédérale de Lausanne (EPFL), CSQI-MATHICSE}\\
{\small \textsuperscript{b} King Abdullah University of Science and Technology (KAUST), CEMSE}\\
}
\title{Sparse approximation of multilinear problems with applications to kernel-based methods in UQ}
\newcommand{\vsQOI}{\R}
\newcommand{\op}{\Xi}
\newcommand{\funct}{\qoi}
\newcommand{\fq}{q}
\newcommand{\dom}{\Gamma}
\newcommand{\supdom}{\Lambda}
\newcommand{\Riesz}{\mathcal{R}}
\newcommand{\native}[2]{\mathcal{N}_{#1}(#2)}
\newcommand{\mix}{\text{mix}}
\newcommand{\Fourier}[1]{\hat{#1}}
\newcommand{\reg}{\beta}
\newcommand{\regs}{\alpha}
\newcommand{\vsPS}{H}
\newcommand{\domPS}{\Gamma}
\newcommand{\factor}{W}
\newcommand{\work}{\text{Work}}
\newcommand{\smoll}{\mathcal{S}}
\newcommand{\smol}{\smoll_L(\goal)}
\newcommand{\qoi}{Q}
\newcommand{\goal}{v}
\newcommand{\inp}{w}
\newcommand{\vsgoal}{V}
\newcommand{\bl}{\mathcal{M}}
\newcommand{\bbl}{\mathcal{B}}
\newcommand{\pa}{D}
\newcommand{\dt}{n}
\newcommand{\ns}{N}
\newcommand{\emp}{\Xi}
\newcommand{\qr}{\mathcal{I}}
\newcommand{\ind}{\mathcal{J}}
\newcommand{\PDE}{P}
\newcommand{\app}{\inp}
\newcommand{\regnum}{2}
\begin{document}
\maketitle
\abstract{

	We provide a framework for the sparse approximation of multilinear problems and show that several problems in uncertainty quantification fit within this framework. In these problems, the value of a multilinear map has to be approximated using approximations of different accuracy and computational work of the arguments of this map. We propose and analyze a generalized version of Smolyak’s algorithm, which provides sparse approximation formulas with convergence rates that mitigate the curse of dimension that appears in multilinear approximation problems with a large number of arguments. We apply the general framework to response surface approximation and optimization under uncertainty for parametric partial differential equations using kernel-based approximation. The theoretical results are supplemented by numerical experiments.\\[1em]
 
 \textbf{Keywords} 
 Multivariate approximation,
 Smolyak algorithm,
 Uncertainty quantification,
 Parametric PDEs,
 Sparse grids,
 Multilevel methods,
 Stochastic collocation methods,
 Kernel-based approximation\\
 
\textbf{Mathematics Subject Classification (2010)} 41A25, 41A63, 65B99, 65C05, 65C20, 65D10, 65D32, 65K10, 65N22, 65N30

}
\pagenumbering{roman}
\pagestyle{headings}
\pagenumbering{arabic}

\section{Introduction}
In the first part of this work, we consider the problem of approximating the value
\begin{equation*}
\goal:=\bl(\inp^{(1)},\dots,\inp^{(\dt)})
\end{equation*}
of a continuous multilinear map $\bl$, given approximations $\app^{(j)}_{\ns}$, $N\geq 0$ of the inputs $\inp^{(j)}$ for $j\in\{1,\dots,n\}$. We assume that as $N$ grows, the accuracy of the approximations $w^{(j)}_{N}$ increases but that simultaneously the required work goes to infinity.

In practice, the map $\bl$ may be as simple as the application of a linear operator to a real-valued function on a domain $\domPS\subset\R^d\to \R$. For example, approximating the identity operator $w^{(1)}:=\Id$ by an interpolation operator based on evaluations in $X\subset\Gamma$ with $|X|=N$, and approximating a function $w^{(2)}:=f$ by $f_M\colon \domPS\to\R$ with $f_M\stackrel{M\to\infty}{\longrightarrow} f$, yields an approximation of $v:=f=\Id f=:\bl(\Id,f)$ that is based on $N$ samples of $f_M$. For this approximation to be accurate we need both a large number $N$ of samples and a large $M$ such that $f_M$ is close to $f$.

In general, a straightforward approach to estimate $v$ is to consider 
\begin{equation*}
\goal_{\ns}:=\bl(\app^{(1)}_{\ns},\dots,\app^{(\dt)}_{\ns}),
\end{equation*}
and let $\ns\to\infty$.
If 
$$
\|\inp^{(j)}-\app^{(j)}_{\ns}\|\leq \ns^{-\beta}
$$ for all $j\in\{1,\dots,\dt\}$, then an induction argument shows 
\begin{equation*}
\|\goal-\goal_{\ns}\|\leq C \ns^{-\beta}.
\end{equation*}

However, if the work required to evaluate $\app^{(j)}_{\ns}$, $j\in\{1,\dots,\dt\}$, grows like $\ns^{\gamma}$, and if the cost for evaluating the multilinear map is multiplicative, then the work required to form $\goal_{\ns}$ is
\begin{equation*}
\work(\goal_{\ns})\approx \ns^{\dt\gamma}.
\end{equation*}

This is an instance of the \emph{curse of dimensionality}: To achieve the same error in $n$ dimensions as in $1$ dimension, the work needs to be exponentiated. 

Smolyak's algorithm was introduced in \cite{smolyak1963quadrature} and further studied in \cite{Wahba1978,Wasilkowski1995,NovakRitter1996,GerstnerGriebel1998} for the case where the multilinear map is given by the tensor product of quadrature and interpolation formulas. It allows an error of size $\exp(-\beta L)L^{\dt-1}$ with work $\exp({\gamma}L)L^{\dt-1}$ \cite[Lemma 2, Lemma 7]{Wasilkowski1995}. This means that the work required to achieve an error of size $\epsilon>0$ is bounded by $\epsilon^{-\gamma/\beta}|\log\epsilon|^{(n-1)(1+\gamma/\beta)}$. Therefore, up to logarithmic factors and multiplicative dimension-dependent constants, the curse of dimensionality has been lifted. More recently, it was shown \cite{griebel2013note,GriebelHarbrecht2013,dung2000continuous} that if the rates $\beta_j$, $\gamma_j$ differ with $j\in\{1,\dots,\dt\}$ the work can further be reduced to $\epsilon^{-\rho}$ if only one input has the maximal exponent ratio $\rho=\max_{j=1}^{\dt}\gamma_{j}/\beta_{j}$. Our results generalize this analysis to the case of general multilinear approximation problems. For example, while the work \cite{griebel2013note} exploits multiscale hierarchies and orthogonal decompositions to construct sparse wavelet approximation spaces, our results apply to arbitrary approximation schemes and thus provide conceptual simplifications that are helpful for both theoretical analysis and general purpose numerical implementations. Furthermore, our results are not restricted to quadrature and interpolation problems, but apply to rather general numerical approximation problems with multiple discretization parameters.

When applied to the example described at the beginning of this introduction, namely $v=\bl(\Id,f)$, Smolyak's algorithm yields a multilevel algorithm that combines samples from different approximations of $f$ with the general idea that more samples are taken from less expensive approximations, the exact numbers being determined by the involved work and convergence rates.  A connection between multilevel methods and Smolyak's algorithm has been discussed previously in \cite{harbrecht2012multilevel,harbrecht2013multilevel}; however, the results there were formulated only for quadrature problems and the analysis was based solely on balancing errors of the involved approximations, ignoring the associated computational work.
 \newline

In the second part of this work, we show how several problems in uncertainty quantification can be cast as multilinear approximation problems and tackled using the general Smolyak algorithm. We demonstrate how multilevel \cite{giles2008multilevel,TeckentrupJantschWebsterEtAl2015,kuo2017multilevel,Giles2015} and multi-index \cite{haji2015multi} methods for the approximation of expectations can be regarded as instances of Smolyak's algorithm, and we obtain novel methods for kernel-based response surface approximation and optimization under uncertainty with improved theoretically guaranteed convergence rates when compared to straightforward approaches.

We study parametric partial differential equations, i.e., problems of the form
\begin{align}
\label{eq:PDEIntro}
\PDE_{y}(u_{y})=f_{y},
\end{align}
where both the partial differential operator $\PDE_{y}$ and the right-hand side $f_{y}$ depend on a parameter $y$, and we are interested in a possibly nonlinear real-valued quantity of interest $\qoi(u_{y})$. For example, \Cref{eq:PDEIntro} may model physical problems with parameters describing material properties, boundary conditions and forcing terms, and $\qoi$ may be a spatial average or a point value.

For numerical computations, at least two types of approximation are required. For each given parameter $y$, we can only compute solutions $u_{y,N}$ of $u_{y}$ stemming from discretizations of \Cref{eq:PDEIntro}, with $u_{y,N}\to u_y$ as $N\to\infty$. Furthermore, we can only compute such approximations for finitely many values of the parameter $y$. The straightforward approach to obtain small errors is to take a sufficiently large number of samples computed with a sufficiently fine discretization of \Cref{eq:PDEIntro} and then use a suitable interpolation method to obtain estimates of the quantity of interest for intermediate parameter values. To improve on this approach, we treat each approximation as one factor within a multilinear approximation problem. Smolyak's algorithm then yields multilevel methods that combine many samples of coarse approximations of the PDE with fewer samples of finer approximations. Such methods were studied for the computation of expected values using different quadrature methods in \cite{heinrich2001multilevel,giles2008multilevel,harbrecht2012multilevel,TeckentrupJantschWebsterEtAl2015,kuo2017multilevel}. If the parameter space is a finite-dimensional product domain, then one may include this structure into the multilinear approximation problem and Smolyak's algorithm coincides with the  \emph{Multi-index Stochastic Collocation Method} of \cite{Haji-AliNobileTamelliniEtAl2015a,Haji-AliNobileTamelliniEtAl2015}. We extend these methods to the approximation of the full response surface, which allows among others for the computation of higher statistical moments, for application to inverse problems and for the optimization of parameters. Using the general theory for Smolyak's algorithm, we obtain convergence rates that essentially only reflect the constituent approximation with the worst complexity. For example, when the response surface is smooth enough, then approximations of the full response surface can be obtained at the same cost as response approximations for one single value of the parameter. To construct approximations of the response surface, we use kernel-based approximation \cite{SchabackWendland2006}, for which we provide the required background in \Cref{sec:kernel}. As a by-product, we obtain novel bounds also for single-level kernel-based approximation on sparse grids. Previous work in this direction \cite{Schreiber2000} established convergence bounds in the $L^{\infty}$-norm; we extend these bounds to Sobolev norms and more general function spaces. We note in passing that the choice of kernel-based approximations is not crucial. Indeed, any interpolation or approximation method that provides operators converging to the identity in some appropriate operator norm may be used. The strengths of kernel-based approximation are that the domain is not restricted to be an interval or hypercube, the data can be given in an unstructured form (i.e. not on a grid), and more general types of information (e.g. derivative values) can easily be included to enhance the resulting approximation.

Finally, we consider problems where the parameter $y=(z,m)$ of \Cref{eq:PDEIntro} can be split into a deterministic part $z\in\domPS\subset\R^d$ and a random part $m$. For example, this situation is studied in optimization under uncertainty \cite{Sahinidis2004,Shapiro2008}, which is concerned with problems of the form
\begin{equation}
\label{eq:minIntro}
\min_{z}E[Q(u_{(z,m)})]+\psi(z),
\end{equation}
where $Q$ is a quantity of interest that is sought to be minimized and $\psi(z)$ represents costs that are associated with the control $z$.

We show how response surface approximation for $z$ (using kernel-based approximation), expectations over $m$ (using Monte Carlo sampling) and numerical approximation of the PDE (using black-box PDE solvers) can be treated jointly in a multilinear approximation setting. Applying Smolyak's algorithm then gives rise to a novel method for the approximative solution of \eqref{eq:minIntro}. More specifically, we obtain surrogate models that can be evaluated at low cost such that standard minimization procedures can be applied. Under some assumptions, these surrogate models converge to the true model at the rate of Monte Carlo methods, which means that the work required for approximation of the PDE and for interpolation between finitely many choices of the parameter becomes negligible.

The remainder of this work is organized as follows. In \Cref{sec:sparse}, we introduce multilinear approximation problems and analyze Smolyak's algorithm applied to this setting. In \Cref{sec:kernel} we provide a short introduction to kernel-based approximation and show how mixed regularity gives rise to a multilinear structure that may be exploited using the results of \Cref{sec:sparse}, yielding kernel-based approximation with sparse grids. In \Cref{sec:UQ}, we study the numerical approximation of parametric and random PDEs. Applying the results from \Cref{sec:sparse}, we obtain novel sparse kernel-based approximation methods with theoretically guaranteed convergence rates. In \Cref{sec:numerics}, we present numerical experiments on parametric, linear elliptic PDE problems that confirm the theoretical convergence rates for response surface approximation and optimization under uncertainty.
\section{Sparse approximation of multilinear problems}
\label{sec:sparse}

Suppose we want to approximate the value
\begin{equation*}
\goal:=\bl( \inp^{(1)},\dots,\inp^{(\dt)})
\end{equation*} of a multilinear map
\begin{equation*}
\bl\colon \factor^{(1)}\times\dots \times \factor^{(\dt)}\to \vsgoal,
\end{equation*}
where $\factor^{(1)},\dots,\factor^{(\dt)}$ and $\vsgoal$ are normed spaces, and $\inp^{(j)}$ are fixed but not available inputs for which we are given approximations $\app^{(j)}_{\ns}\stackrel{N\to\infty}{\longrightarrow} \inp^{(j)}$,  $j\in\{1,\dots,\dt\}$.

In the applications that we consider in this work (see \Cref{sec:kernel,sec:UQ}), each input $\inp^{(j)}$ will be either a real-valued function (where the need for approximation comes from the discretized solution of differential equations that define these functions), an identity operator (which will be approximated by interpolation operators based on finitely many deterministic samples), or an expected value (which is again approximated using finitely many, either deterministic or random, samples). The multilinear map $\bl$ will be a combination of applications of operators to elements of their domain and of tensor products of operators. Finally, the value $v$ will be a scalar, a real-valued function, or an operator.

A straightforward way to approximate $\goal$ is to consider 
\begin{equation*}
	\bl(\app_{\ns^{(1)}}^{(1)},\dots,\app_{\ns^{(\dt)}}^{(\dt)})
\end{equation*} with large $\ns^{(j)}$ for all $j\in\{1,\dots,\dt\}$.
Under the assumptions stated below, the work required by this approach for an error of size $\epsilon>0$ grows like $\epsilon^{-\gamma_1/\beta_1-\dots-\gamma_n/\beta_n}$. 
We will derive an alternative, decomposition based approximation of $\goal$ that reduces the workload to $\epsilon^{-\rho}$, $\rho:=\max_{j=1}^n\gamma_j/\beta_j$, up to possible logarithmic factors. In the context of integration and interpolation problems, this approach is known as Smolyak's algorithm \cite{smolyak1963quadrature}.
\begin{itemize}
\item \textbf{Assumption 1 (Continuity):} The map $\bl$ is continuous. This is equivalent\fxnote{(\cite[Proposition 4.1]{AmannEscher2008})} to the existence of a constant $C>0$ such that 
\begin{equation*}
\label{eq:cross}
\|\bl(a^{(1)},\dots,a^{(\dt)})\|\leq C \prod_{j=1}^n\|a^{(j)}\|
\end{equation*}
for any $a^{(j)} \in  \factor^{(j)}, j\in\{1,\dots,\dt\}$.\fxnote{According to Lemma 4.33 of \cite{Hackbusch2012} this is always true for a norm that is defined on the whole space, if all factors are Banach (except one)}
 Here and in the remainder of this work, we use the generic symbol $\|\cdot\|$ to denote norms whenever it is evident from the context which specific norm is meant. 
\end{itemize}

 \begin{itemize}
 \item \textbf{Assumption 2 (Componentwise approximability):} For each $j\in\{1,\dots,\dt\}$, we have
 \begin{equation*}
 \|\inp^{(j)}-\app^{(j)}_{\ns}\|\lesssim_{\ns}  \ns^{-\beta_j}
 \end{equation*}
  for some $\beta_j>0$. We use the notation $\lesssim_{\ns}$ to denote inequalities that hold up to a factor that is \emph{independent} of $\ns$.
 \item \textbf{Assumption 3 (Componentwise required work):} For each $j\in\{1,\dots,\dt\}$, the construction of $\app^{(j)}_{\ns}$ requires the work 
 \begin{equation*}
 \work(\app^{(j)}_{\ns})\lesssim_{N}\ns^{\gamma_j}
\end{equation*} for some $\gamma_j>0$.
 \item \textbf{Assumption 4 (Overall work):} The work required for the evaluation of $\bl$ is subadditive and multiplicative,
\begin{align*} \work(\bl(a^{(1)},\dots,a^{(\dt)})+\bl(b^{(1)},\dots,b^{(\dt)}))&\leq \work(\bl(a^{(1)},\dots,a^{(\dt)}))+\work(\bl(b^{(1)},\dots,b^{(\dt)}))\\
\work(\bl(a^{(1)},\dots,a^{(\dt)}))&=\prod_{j=1}^\dt \work(a^{(j)})
\end{align*}
 for any $a^{(j)}, b^{(j)}\in\factor^{(j)}$, $j\in\{1,\dots,\dt\}$.
 \end{itemize}
To define Smolyak's algorithm, for any $j\in\{1,\dots,\dt\}$ we consider subsequences $\ns^{(j)}_l:=\exp(t_j l)$, $l\in\Np:=\{1,\dots\}$, with $t_j>0$ to be chosen below, and we define the consecutive differences
\begin{align*}
&\Delta^{(j)}_{l}:=\app_{\ns^{(j)}_{l}}^{(j)}-\app_{\ns^{(j)}_{l-1}}^{(j)}\quad\forall l\geq 1
 \end{align*}
 with the auxiliary definition $\app_{\ns^{(j)}_{0}}^{(j)}:=0$.
 Deferring questions of convergence to \Cref{pro:convergence} below, we can write
 \begin{equation}
 \label{eq:inftensor}
 \begin{split}
 \goal&=\bl(\inp^{(1)},\dots,\inp^{(\dt)})\\
 &=\bl(\sum_{l_1=1}^{\infty}\Delta^{(1)}_{l_1},\dots,\sum_{l_{\dt}=1}^\infty \Delta^{(\dt)}_{l_{\dt}})\\
 &=\sum_{\bm{l}\in \Np^\dt}\bl(\Delta^{(1)}_{l_1},\dots,\Delta^{(\dt)}_{l_\dt})\\
 &=:\sum_{\bm{l}\in \Np^\dt}\Delta_{\bm{l}}.
 \end{split}
 \end{equation}
 It is now reasonable to restrict the final sum in the above decomposition of $v$ to those multi-indices $\bm{l}\in\Np^\dt$ for which the ratio of work and contribution (measured by the norm) associated with 
 \begin{equation*}
 \Delta_{\bm{l}}=\bl(\Delta^{(1)}_{l_{1}},\dots,\Delta^{(\dt)}_{l_{\dt}})
 \end{equation*}
 is below some threshold. Thanks to Assumptions 3 and 4, the work associated with $\Delta_{\bm{l}}$ can be bounded by
  \begin{equation}
  \label{eq:workl}
  \begin{split}
 \work(\Delta_{\bm{l}})&=\prod_{j=1}^\dt \work(\Delta_{l_j}^{(j)})\\
 &\leq \prod_{j=1}^\dt\left[ \exp(\gamma_jt_jl_j)+\exp(\gamma_jt_j(l_{j}-1))\right]\\
 & \lesssim_{\bm{l}} \prod_{j=1}^\dt \exp(\gamma_jt_jl_j)
 \end{split}
  \end{equation}
  and due to Assumptions 1 and 2, the norm of $\Delta_{\bm{l}}$ can be bounded by
  \begin{equation}
  \label{eq:norml}
  \begin{split}
  \|\Delta_{\bm{l}}\|&\leq C\prod_{j=1}^\dt \|\app^{(j)}_{l_j}-\app^{(j)}_{l_j-1}\|\\
  &\lesssim_{\bm{l}} \prod_{j=1}^\dt \exp(-\beta_jt_jl_j).
  \end{split}
  \end{equation}
 Therefore, we approximate the work-to-contribution ratio of $\Delta_{\bm{l}}$ by
 \begin{equation}
 \label{eq:ratio}
\exp(\sum_{j=1}^\dt (\gamma_j+\beta_j)t_jl_j).
 \end{equation}
 Since strict inequalities in our derivations above are possible, this approximation may not be exact, and therefore merely functions as a motivation for the following definitions. Looking at \eqref{eq:ratio}, we may proceed in two ways\fxnote{which yield the same rates of convergence}. Either we choose $t_j:=1/(\gamma_j+\beta_j)$ and restrict the sum in \Cref{eq:inftensor} to those $\Delta_{\bm{l}}$ with $|\bm{l}|_1:=l_{1}+\dots +l_{\dt}\leq L\in\Np$ or we take $t_j$ to be constant and sum up all $\Delta_{\bm{l}}$ with $(\bm{\gamma}+\bm{\beta})\cdot\bm{l}\leq L$. We choose the first option and define \emph{Smolyak's algorithm}
 \begin{equation}
 \label{def:smolyak}
 \smol:=\sum_{|\bm{l}|_1\leq L} \Delta_{\bm{l}}=\sum_{|\bm{l}|_1\leq L} \bl(\Delta^{(1)}_{l_1},\dots,\Delta^{(\dt)}_{l_\dt}).
 \end{equation}
  The following \emph{combination rule} \cite{GriebelSchneiderZenger1992} can be proven verbatim as in \cite[Lemma 1]{Wasilkowski1995} and can facilitate numerical implementations:
  \begin{equation}
  \label{eq:combinationrule}
  \smol=\sum_{L-\dt+1\leq |\bm{l}|_1\leq L} (-1)^{L-|\bm{l}|_1}\binom{\dt-1}{L-|\bm{l}|_1} \goal_{\bm{l}},
  \end{equation}
  where 
  \begin{equation*}
  \goal_{\bm{l}}:=\bl(\app^{(1)}_{\ns^{(1)}_{l_1}},\dots,\app^{(\dt)}_{\ns^{(\dt)}_{l_\dt}}).
  \end{equation*}
Of course, $\smol$ is simply an  element of $\vsgoal$; the word \emph{algorithm} is used because in Smolyak's original publication \cite{smolyak1963quadrature} the factors $\app^{(j)}_{\ns}$ were univariate interpolation or quadrature formulas, the multilinear map corresponded to the tensor product of these operators, and Smolyak's algorithm provided instructions for the combination of the previously known univariate formulas to obtain novel corresponding multivariate formulas.

 By \Cref{eq:workl}, the work associated to $\smol$ is bounded by 
\begin{equation}
\work(\smol)\leq \sum_{|\bm{l}|_1\leq L} \exp(\bm{g}\cdot \bm{l}),
\end{equation}
where $\bm{g}=(g_1,\dots,g_{\dt})$ with $g_j:=\gamma_j/(\gamma_j+\beta_j)$. The exponential sum on the right hand side of the previous inequality  been estimated in \cite[Lemma 6]{Haji-AliNobileTamelliniEtAl2015a} with the result 
\begin{equation}
\label{eq:work}
\work(\smol)\lesssim_{L} \exp(g_{\max} L)L^{\dt^{+}(\bm{g})-1},
\end{equation}
where $g_{\max}:=\max_{j=1}^\dt g_j$ and $\dt^{+}(\bm{g}):=|\{j: g_j=g_{\max}\}|$.
Furthermore, \Cref{eq:norml,eq:inftensor} (see \Cref{pro:convergence} below for a rigorous justification) show that 
\begin{equation}
\label{eq:worksum}
\|\goal-\smol\|=\|\sum_{|\bm{l}|_1>L}\Delta_{\bm{l}}\|\lesssim_{L} \sum_{|\bm{l}|_1>L}\exp(-\bm{b}\cdot\bm{l}),
\end{equation}
where $\bm{b}=(b_1,\dots,b_\dt)$ with $b_j:={\beta_j}/({\gamma_j}+{\beta_j})=1-{g_j}$. Again, it remains to bound an exponential sum; although this time an infinite one with decaying terms. This has been done in \cite[Lemma 7]{Haji-AliNobileTamelliniEtAl2015a}, with the result
 \begin{equation}
 \label{eq:workestimsum}
  \|\goal - \smol\|\lesssim_{L} \exp(-{b}_{\min}{L})L^{\dt^{-}(\bm{b})-1},
 \end{equation}
where $b_{\min}:=\min_{j=1}^\dt b_j$ and $\dt^{-}(\bm{b}):=|\{j: b_j=b_{\min}\}|$.  
\\

To summarize the results in a succinct fashion, we define
\begin{equation*}
\rho:=\max_{j=1}^{\dt}\gamma_j/\beta_j=g_{\max}/b_{\min}
\end{equation*}
and
\begin{equation*}
\dt_0:=|\{j:\gamma_j/\beta_j=\rho\}|=\dt^{-}(\bm{b})=\dt^{+}(\bm{g}).
\end{equation*} 

\begin{thm}\textbf{(Convergence of sparse approximations)}
\label{thm:central}
For $\epsilon>0$ small enough, we can choose $L=L(\epsilon)$ such that 
\begin{equation}
\label{eq:centralerror}
\|\smol-\goal\|\lesssim_{\epsilon} \epsilon
\end{equation}
and
\begin{align}
\label{eq:epswork}
\work(\smol)\lesssim_{\epsilon}\epsilon^{-\rho}|\log\epsilon|^{(\dt_0-1)(1+\rho)}.
\end{align}
\end{thm}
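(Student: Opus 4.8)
The plan is to combine the work estimate \eqref{eq:work} and the error estimate \eqref{eq:workestimsum} by eliminating $L$ in favor of $\epsilon$. First I would use \eqref{eq:workestimsum}, which gives $\|\smol-\goal\|\lesssim_{L}\exp(-b_{\min}L)L^{\dt_0-1}$. To make the right-hand side at most $\epsilon$, I would solve (asymptotically) the equation $\exp(-b_{\min}L)L^{\dt_0-1}=\epsilon$ for $L$. Taking logarithms, $b_{\min}L-(\dt_0-1)\log L=|\log\epsilon|$, so to leading order $L\sim |\log\epsilon|/b_{\min}$, and more precisely there is a constant $c$ such that choosing
\begin{equation*}
L=L(\epsilon):=\left\lceil \frac{1}{b_{\min}}\Bigl(|\log\epsilon| + c\log|\log\epsilon|\Bigr)\right\rceil
\end{equation*}
makes $\exp(-b_{\min}L)L^{\dt_0-1}\lesssim_\epsilon\epsilon$, which together with \eqref{eq:workestimsum} yields \eqref{eq:centralerror}. (Here one should be slightly careful that such a $c$ exists and that the implied constant in \eqref{eq:workestimsum} is absorbed; this is a routine monotonicity argument in $L$.)

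Next I would substitute this choice of $L$ into the work bound \eqref{eq:work}, $\work(\smol)\lesssim_L \exp(g_{\max}L)L^{\dt_0-1}$, using $\dt^+(\bm g)=\dt_0$. Since $L\lesssim_\epsilon |\log\epsilon|$ we immediately get $L^{\dt_0-1}\lesssim_\epsilon |\log\epsilon|^{\dt_0-1}$. For the exponential factor, with $L\le \frac{1}{b_{\min}}(|\log\epsilon|+c\log|\log\epsilon|)+1$ we obtain
\begin{equation*}
\exp(g_{\max}L)\lesssim_\epsilon \exp\!\Bigl(\tfrac{g_{\max}}{b_{\min}}|\log\epsilon|\Bigr)\,\exp\!\Bigl(\tfrac{g_{\max}}{b_{\min}}\,c\log|\log\epsilon|\Bigr)=\epsilon^{-\rho}\,|\log\epsilon|^{\,\rho c},
\end{equation*}
using $\rho=g_{\max}/b_{\min}$. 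Multiplying the two factors gives $\work(\smol)\lesssim_\epsilon \epsilon^{-\rho}|\log\epsilon|^{\,\rho c+\dt_0-1}$, and it remains only to check that the exponent $\rho c+\dt_0-1$ can be taken to equal $(\dt_0-1)(1+\rho)$; tracking the constant $c$ from the inversion step (one finds $c=\dt_0-1$ works, since that is exactly the power of $L$ appearing in \eqref{eq:workestimsum}) closes this.

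The main obstacle — really the only nontrivial point — is the implicit inversion $L\mapsto\epsilon$: one must show that an $L(\epsilon)$ of the stated form simultaneously forces the error below $\epsilon$ while keeping the $\log$-power in the work at the claimed value $(\dt_0-1)(1+\rho)$, rather than something larger. The cleanest way is to first fix $L$, read off from \eqref{eq:workestimsum} the largest $\epsilon=\epsilon(L)$ one can certify, namely $\epsilon(L)\asymp \exp(-b_{\min}L)L^{\dt_0-1}$, then invert this monotone relation on the sequence $L\in\Np$ and verify $|\log\epsilon(L)|=b_{\min}L-(\dt_0-1)\log L+O(1)$, so that $L=b_{\min}^{-1}|\log\epsilon|+b_{\min}^{-1}(\dt_0-1)\log L+O(1)=b_{\min}^{-1}|\log\epsilon|+O(\log|\log\epsilon|)$; feeding the refined expansion (not just $L=O(\log)$) into $\exp(g_{\max}L)$ is what produces exactly the factor $|\log\epsilon|^{(\dt_0-1)\rho}$, and combined with the separate $L^{\dt_0-1}$ factor in \eqref{eq:work} gives the exponent $(\dt_0-1)(1+\rho)$. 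Everything else is bookkeeping with the $\lesssim$ notation and the fact that $\epsilon$ small corresponds to $L$ large, where all the cited asymptotic lemmas apply.
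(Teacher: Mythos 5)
Your proof is correct and follows essentially the same route as the paper's: both choose $L$ so that $\exp(-b_{\min}L)L^{\dt_0-1}\approx\epsilon$ and then substitute into the work bound \eqref{eq:work}, with the extra factor $|\log\epsilon|^{(\dt_0-1)\rho}$ arising exactly as you describe from $\exp(g_{\max}L)=\exp(\rho b_{\min}L)$ combined with the $L^{\dt_0-1}$ in the error bound. The only difference is presentational: the paper defines $L$ implicitly as the largest integer with $\psi(L):=\exp(-b_{\min}L)L^{\dt_0-1}>\epsilon$ and bounds the work by $\psi(L)^{-\rho}|\log\psi(L)|^{(\dt_0-1)(1+\rho)}\leq\epsilon^{-\rho}|\log\epsilon|^{(\dt_0-1)(1+\rho)}$ via monotonicity, which avoids your explicit $\log|\log\epsilon|$ expansion and the tracking of the constant $c=\dt_0-1$.
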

\begin{proof}
Given $\epsilon>0$, let $L$ be the largest integer such that 
$\psi(L):=\exp(-{b}_{\min}{L})L^{\dt_0-1}>\epsilon$. By \Cref{eq:workestimsum}, and because $\psi(L)/\psi(L+1)$ is bounded and $\psi(L+1)\leq \epsilon$, this implies \Cref{eq:centralerror}. Furthermore, by \Cref{eq:work} we have
\begin{equation*}
\begin{split}
\work(\smol)&\lesssim_{L}  \exp(g_{\max}L)L^{\dt_0-1}\\
&= \exp(\rho b_{\min}L)L^{-(\dt_0-1)\rho}L^{(\dt_0-1)(1+\rho)}\\
&\lesssim_{L} \psi(L)^{-\rho}|\log\psi(L)|^{(\dt_0-1)(1+\rho)}\\
&\leq \epsilon^{-\rho}|\log\epsilon|^{(\dt_0-1)(1+\rho)},
\end{split}
\end{equation*}
where the last inequality holds for all small enough $\epsilon>0$ by the choice of $L$.
\end{proof}
\begin{rem}
	\Cref{thm:central} generalizes results on sparse wavelet approximation that were proven in \cite{griebel2013note} using orthogonal decompositions. Indeed, we show in \Cref {sec:kernel} how high-dimensional approximation can be analyzed as multilinear approximation problem and deduce results similar to those in \cite{griebel2013note} but for kernel-based approximation. 
\end{rem}
\begin{rem}\textbf{(Exponential convergence)} It may happen that one of the inputs exhibits exponential convergence, 
 \begin{equation*}
\|\inp^{(j)}-\app^{(j)}_{\ns}\|\lesssim_{\ns} \exp(-s_j\ns)
 \end{equation*} and algebraic work, 
 \begin{equation*}
 \work(\app^{(j)}_{\ns})\lesssim_{\ns} \ns^{\gamma_j}.
 \end{equation*}
Such inputs satisfy Assumption 2 for any exponent and thus can always be added to a problem without increasing the bounds in \Cref{thm:central}. When all inputs converge exponentially, improved exponential convergence rates can be obtained by an extended analysis, see \Cref{diss} and \cite{GriebelOettershagen}.
\end{rem}
\begin{rem}\textbf{(Logarithmic factors)}
If the work of one of the inputs exhibits additional logarithmic factors,
\begin{equation*}
\work(\app^{(j)}_{\ns})\lesssim_{\ns} \ns^{\gamma_j}(\log \ns)^{\mu_j},
\end{equation*}
then the work required for an error of size $\epsilon$ increases by the factor $|\log \epsilon|^{s}$, where $s:=\sum_{\{j:\gamma_j/\beta_j=\rho\}} \mu_j$. Indeed, $\sum_{|\mathbf{l}|_1\leq L}\exp(\mathbf{g}\cdot\mathbf{l})\prod_{j=1}^{\dt}l_j^{\mu_j}\lesssim_{L}\exp(g_{\max}L)L^{n^+(\mathbf{g})-1+s}$, which follows from a simple supremum bound together with \cite[Lemma 6]{Haji-AliNobileTamelliniEtAl2015a}. Therefore, \Cref{eq:work} holds with the additional factor $L^s$ and \Cref{thm:central} holds with the additional factor $|\log \epsilon|^{s}$.
\end{rem}
\begin{rem}\textbf{(Tracking constants)}
Provided more explicit bounds on error and work, 
 \begin{align*}
 \|\inp^{(j)}-\app^{(j)}_{\ns}\|\leq C_{E,j} \ns^{-\beta_j}\\
 \work(\app^{(j)}_{\ns})\leq C_{W,j}\ns^{\gamma_j},
\end{align*}
we may refine the work bound in \Cref{thm:central} to
\begin{align}
\work(\smol)\lesssim_{\epsilon,C_{E},C_{W}}C_{W}C_{E}^{\rho}\epsilon^{-\rho}|\log\epsilon|^{(\dt_0-1)(1+\rho)},
\end{align}
for $C_{W}:=\prod_{j=1}^\dt C_{W,j}$ and $C_{E}:=\prod_{j=1}^\dt C_{E,j}$.
\end{rem}
\begin{rem}\textbf{(Integer constraints)} We assumed that we can choose $\ns\in\Rnn:=\{x\in \R : x\geq 0\}$. In practice, $\ns$ is often restricted to being a natural number. If we implicitly round up all occurences of $\ns$, then the analysis above goes through since the error bound in \Cref{eq:norml} persists unaltered and the work bound in \Cref{eq:workl} persists with another constant.
\end{rem}

\begin{lem}
\label{pro:convergence}
Under Assumptions 1 and 2, the elements $\Delta_{\bm{l}}\in \vsgoal$, $\bm{l}\in\N^{\dt}$ are absolutely summable and their sum is $\goal$. 
In particular, 
\begin{equation*}
\goal-\sum_{|\bm{l}|_1\leq L}\Delta_{\bm{l}}=\sum_{|\bm{l}|_1>L} \Delta_{\bm{l}}.
\end{equation*}
\end{lem}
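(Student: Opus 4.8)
The plan is to establish absolute summability first, and then deduce that the sum equals $\goal$ by a telescoping argument combined with the continuity of $\bl$. For absolute summability, I would use exactly the bound that motivated the construction: by Assumption 1 (continuity of $\bl$, in the equivalent form with constant $C$) together with Assumption 2, each difference term satisfies
\begin{equation*}
\|\Delta_{\bm{l}}\| = \|\bl(\Delta^{(1)}_{l_1},\dots,\Delta^{(\dt)}_{l_\dt})\| \leq C\prod_{j=1}^{\dt}\|\app^{(j)}_{\ns^{(j)}_{l_j}}-\app^{(j)}_{\ns^{(j)}_{l_j-1}}\| \lesssim_{\bm{l}} \prod_{j=1}^{\dt}\exp(-\beta_j t_j l_j),
\end{equation*}
as already recorded in \Cref{eq:norml}. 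Each factor $\sum_{l_j=1}^{\infty}\exp(-\beta_j t_j l_j)$ is a convergent geometric series since $\beta_j, t_j>0$, so by Tonelli/Fubini for nonnegative terms the product over $j$ of these series converges, which gives $\sum_{\bm{l}\in\Np^{\dt}}\|\Delta_{\bm{l}}\|<\infty$. (One should note the innocuous point that $\Delta^{(j)}_1 = \app^{(j)}_{\ns^{(j)}_1}$ because of the auxiliary convention $\app^{(j)}_{\ns^{(j)}_0}:=0$, so the first factor needs the triangle inequality $\|\app^{(j)}_{\ns^{(j)}_1}\|\leq \|\inp^{(j)}\| + \|\inp^{(j)}-\app^{(j)}_{\ns^{(j)}_1}\|$, which is still bounded independently of $l_j$ — this is absorbed into the $\lesssim_{\bm{l}}$.)

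Next I would identify the sum. Since $\vsgoal$ is a normed space, absolute summability does not automatically give unconditional convergence unless $\vsgoal$ is complete; I would either assume completeness of $\vsgoal$ (which holds in all applications, and should perhaps be added to the standing hypotheses) or argue directly via the partial sums over the ``rectangular'' index sets $R_{\bm{k}}:=\{\bm{l}: 1\leq l_j\leq k_j\}$. On $R_{\bm{k}}$ the sum telescopes in each coordinate separately:
\begin{equation*}
\sum_{\bm{l}\in R_{\bm{k}}}\Delta_{\bm{l}} = \sum_{\bm{l}\in R_{\bm{k}}}\bl(\Delta^{(1)}_{l_1},\dots,\Delta^{(\dt)}_{l_\dt}) = \bl\Bigl(\sum_{l_1=1}^{k_1}\Delta^{(1)}_{l_1},\dots,\sum_{l_\dt=1}^{k_\dt}\Delta^{(\dt)}_{l_\dt}\Bigr) = \bl(\app^{(1)}_{\ns^{(1)}_{k_1}},\dots,\app^{(\dt)}_{\ns^{(\dt)}_{k_\dt}}),
\end{equation*}
using multilinearity to pull the finite sums outside and the telescoping identity $\sum_{l=1}^{k}\Delta^{(j)}_l = \app^{(j)}_{\ns^{(j)}_k}$. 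As $\bm{k}\to\infty$ componentwise, Assumption 2 gives $\app^{(j)}_{\ns^{(j)}_{k_j}}\to\inp^{(j)}$, and continuity of $\bl$ yields $\sum_{\bm{l}\in R_{\bm{k}}}\Delta_{\bm{l}}\to\bl(\inp^{(1)},\dots,\inp^{(\dt)})=\goal$. Combined with absolute summability (which guarantees the net of all finite partial sums is Cauchy, hence converges to a limit independent of how the index set is exhausted, in particular along the rectangles), this forces $\sum_{\bm{l}\in\Np^{\dt}}\Delta_{\bm{l}}=\goal$.

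Finally, the displayed identity $\goal-\sum_{|\bm{l}|_1\leq L}\Delta_{\bm{l}}=\sum_{|\bm{l}|_1>L}\Delta_{\bm{l}}$ is then immediate: the family $\{\Delta_{\bm{l}}\}$ is absolutely summable, so it may be split into the two disjoint subfamilies indexed by $|\bm{l}|_1\leq L$ (a finite set) and $|\bm{l}|_1>L$, and the sums add. The main obstacle, such as it is, is the subtlety about convergence in a merely normed (not necessarily Banach) target space $\vsgoal$; the cleanest resolution is to observe that the rectangular partial sums converge by continuity of $\bl$ to a definite limit $\goal$, and absolute summability upgrades this to convergence of every exhaustion, so no completeness of $\vsgoal$ is actually needed — the limit $\goal$ is exhibited explicitly rather than constructed abstractly. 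Everything else is the routine geometric-series and telescoping bookkeeping sketched above.
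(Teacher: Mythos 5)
Your proposal is correct and follows essentially the same route as the paper's proof: absolute summability from the norm bound \Cref{eq:norml} (the paper invokes \cite[Lemma 7]{Haji-AliNobileTamelliniEtAl2015a} where you sum geometric series directly), followed by identifying the limit via the telescoping partial sums over boxes, which equal $\bl(\app^{(1)}_{\ns^{(1)}_{k_1}},\dots,\app^{(\dt)}_{\ns^{(\dt)}_{k_\dt}})$ and converge to $\goal$ by continuity of $\bl$. Your explicit handling of the fact that $\vsgoal$ need not be complete (exhibiting the limit along rectangles rather than invoking completeness) and of the first difference $\Delta^{(j)}_1=\app^{(j)}_{\ns^{(j)}_1}$ is slightly more careful than the paper's write-up, but it is the same argument in substance.
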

\begin{proof}
By \Cref{eq:norml} and \cite[Lemma 7]{Haji-AliNobileTamelliniEtAl2015a} we have
\begin{equation*}
\begin{split}
\sum_{\bm{l}\in\N^d}\|\Delta_{\bm{l}}\|<\infty.
\end{split}
\end{equation*}
Therefore, all rearrangements yield the same limit, if one exists.
But
\begin{equation*}
\begin{split}
\sum_{|\bm{l}|_{\infty}\leq L}\Delta_{\bm{l}}=\bl( \sum_{l_1=1}^{L}\Delta^{(1)}_{l_1},\dots,\sum_{l_{\dt}=1}^L \Delta^{(\dt)}_{l_{\dt}})=\bl(\app^{(1)}_{L},\dots,\app^{(\dt)}_{L})\to \goal
\end{split}
\end{equation*}
by continuity of $\bl$.
\end{proof}

 \section{Kernel-based approximation}
 \label{sec:kernel}

 In this section, we describe kernel-based approximation methods \cite{SchabackWendland2006,wendland2004scattered}, which we later use to approximate response surfaces. 
 
  Assume we want to reconstruct an element $f$ of a Hilbert space $(H,\langle\cdot,\cdot\rangle)$ from the output $Tf$ of a linear \emph{sampling operator} $T\in\mathcal{L}(H,\R^\ns)$, with $N\in \Np$. 
  
  If $H$ is infinite-dimensional, then the output of $T$ never uniquely determines an element of $H$. To resolve this ambiguity we select the interpolant with minimal norm,
 \begin{align}
 \label{eq:app}
 Sf:=\argmin_{\stackrel{s\in H}{Ts=Tf}}\|s\|^2,
 \end{align} 
 where $\|s\|^2:=\langle s,s\rangle$.
In the remainder of this work, we refer to $S$ as the \emph{best-approximation associated with T}, which is justified by property (ii) in \Cref{thm:opt} below.

We denote by $T^*\colon \mathbb{R}^\ns\to H$ the Hilbert space adjoint of $T$. For the sake of simplicity we assume that $H$ is a real Hilbert space and that $T$ is surjective, which is the case in all applications we consider in this work. In particular, this implies that $T^*$ is injective and $TT^*$ invertible.

\begin{thm}\label{thm:opt}
	\begin{enumerate}[(i)]
		\item 	 The best-approximation $S$ from \Cref{eq:app} is well-defined, linear, and satisfies
		\begin{align}\label{eq:exint}
		S=T^*(TT^*)^{-1}T.
		\end{align} 

		\item 
		$Sf$ is the best approximation to $f$ from $(\ker T)^{\bot}$: For any $f\in H$, we have
	\begin{align*}
	\|f-Sf\|=\inf_{g\in (\ker T)^{\bot}}\|f-g\|.
	\end{align*}

	\end{enumerate}
\end{thm}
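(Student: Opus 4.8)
The plan is to prove the two claims about the minimal-norm interpolant $S$ by exploiting the decomposition $H = \ker T \oplus (\ker T)^\bot$, which is available because $\ker T$ is closed (as $T$ is bounded) in the Hilbert space $H$. First I would observe that the constraint set $\{s \in H : Ts = Tf\}$ is the affine subspace $f + \ker T$, so that minimizing $\|s\|^2$ over this set is exactly the problem of finding the point of $f + \ker T$ closest to the origin; by the projection theorem this point exists, is unique, and is characterized by being orthogonal to $\ker T$, i.e. $Sf \in (\ker T)^\bot$. Uniqueness and existence give well-definedness, and linearity follows because the orthogonal projection onto $(\ker T)^\bot$ is linear and $Sf = f - P_{\ker T} f = P_{(\ker T)^\bot} f$ — indeed, once I show $Sf = P_{(\ker T)^\bot}f$, both (i) and (ii) become statements about this orthogonal projection.

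For part (ii), note that $f - Sf = P_{\ker T} f \in \ker T$, and for any $g \in (\ker T)^\bot$ we have $Sf - g \in (\ker T)^\bot$, so by Pythagoras $\|f - g\|^2 = \|f - Sf\|^2 + \|Sf - g\|^2 \geq \|f - Sf\|^2$, with equality iff $g = Sf$. This is the standard best-approximation-from-a-subspace argument and should be quick.

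The remaining work, and the main obstacle, is establishing the explicit formula $S = T^*(TT^*)^{-1}T$. Here I would use the standing assumption that $T$ is surjective, so that $TT^*$ is invertible (as noted in the excerpt, surjectivity of $T$ forces injectivity of $T^*$ and invertibility of $TT^*$ on the finite-dimensional space $\R^N$). The strategy is to verify that the operator $\tilde S := T^*(TT^*)^{-1}T$ has the two defining properties of $S$: namely $T\tilde S f = Tf$ for all $f$, which is immediate since $T T^* (TT^*)^{-1} T f = Tf$; and $\mathrm{range}(\tilde S) \subseteq (\ker T)^\bot$, which holds because $\mathrm{range}(T^*) = (\ker T)^\bot$ (the standard identity $\overline{\mathrm{range}(T^*)} = (\ker T)^\bot$, with closure unnecessary since $\mathrm{range}(T^*)$ is finite-dimensional hence closed). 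Since $\tilde S f$ lies in the constraint set $f + \ker T$ and in $(\ker T)^\bot$, and we have shown the minimal-norm element is the unique such point, we conclude $\tilde S = S$. The one subtlety to handle carefully is making sure $(TT^*)^{-1}$ genuinely makes sense — i.e. that $T T^* \colon \R^N \to \R^N$ is injective — which follows because $T^*x = 0 \implies x = 0$ (injectivity of $T^*$) combined with $\langle TT^* x, x\rangle = \|T^* x\|^2$, so $TT^*x = 0 \implies T^*x = 0 \implies x = 0$; on the finite-dimensional space $\R^N$ injectivity gives invertibility.
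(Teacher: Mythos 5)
Your proposal is correct and follows essentially the same route as the paper: identify $Sf$ as the point of the affine subspace $f+\ker T$ closest to the origin, characterize it by $TSf=Tf$ together with $Sf\perp\ker T$, verify that $T^*(TT^*)^{-1}T$ satisfies both conditions using $\operatorname{range}(T^*)=(\ker T)^{\bot}$, and deduce (ii) from $f-Sf\in\ker T$. Your explicit check that $TT^*$ is invertible via $\langle TT^*x,x\rangle=\|T^*x\|^2$ is a detail the paper handles in the surrounding text, but the argument is the same.
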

\begin{proof}
\Cref{eq:app} defines $Sf$ as the minimal-norm approximation of $0$ from the affine subspace $f+\ker T$. In Hilbert spaces, this coincides with the orthogonal projection, which is uniquely determined by $TSf=Tf$ and $\langle Sf,v\rangle =0$ for all $v\in \ker T$. Both of these equations are satisfied by $Sf=T^*(TT^*)^{-1}Tf$, which proves (i). From here we see that $Sf\in \I T^*=(\ker T)^{\bot}$ and $f-Sf\in \ker T=((\ker T)^{\bot})^{\bot}$, which implies that $Sf$ is also the orthogonal projection of $f$ onto $(\ker T)^{\bot}$ and thus (ii) holds.
\end{proof}
If our objects of interest are real-valued functions on domains $\dom\subset\R^d$, an explicit classification of all Hilbert spaces of such functions is desirable. Under the additional assumption that point evaluations are continuous, this can be achieved through \emph{reproducing kernels}.
\fxnote{Wendland tut so als ob der reproduzierende kernel automatisch stetig waere, das ist er aber nicht. es gilt: kernel stetig<-> alle funktionen stetig}
\begin{definition}
A Hilbert space $(H,\|\cdot\|)$ of functions on $\dom\subset\R^d$ such that all point evaluations $\delta_x f:=f(x)$, $x\in\dom$ are continuous with respect to $\|\cdot\|$ is called \emph{Reproducing Kernel Hilbert Space} (RKHS). We call $\Phi\colon\dom\times\dom\to\R$, $\Phi(x,y):=\Riesz(\delta_x)(y)$ the \emph{reproducing kernel} of $H$, where $\Riesz\colon H^*\to H$ is the Riesz isometry. 
\end{definition}
It can be shown \cite{Aronszajn1950} that the reproducing kernel of an RKHS is symmetric and positive definite on $\dom\times\dom$, meaning that
\begin{equation*}
(\Phi(x_i,x_j))_{i,j=1}^\ns\fxnote{oder strikt positive defint?}
\end{equation*}
is positive definite for any $\{x_1,\dots,x_\ns\}\subset\dom$. Conversely, any function that satisfies these conditions is the reproducing kernel of a unique RKHS, which is called the \emph{native space} of $\Phi$ and denoted by $\native{\Phi}{\dom}$. 

Depending on the context, the application of best-approximation theory in Hilbert spaces to the reconstruction of functions in RKHS is called kernel-based approximation \cite{FasshauerMcCourt2016}, scattered data approximation \cite{wendland2004scattered}, kriging \cite{Stein2012}, or kernel learning \cite{ScholkopfSmola2001}.

For practical applications it is crucial that, given the reproducing kernel, best-approximations can be computed exactly. The following proposition is well known in the theory of kernel-based approximation; we provide a proof for the convenience of the reader.
\begin{pro}\label{pro:pro}
Let $T=(\lambda_1,\dots,\lambda_\ns)$ with $\lambda_i\in H^*$. Then 

\begin{equation*}
Sf=\sum_{i=1}^{\ns} s_{i}\lambda_if,
\end{equation*}
where 
\begin{equation*}
s_i=\sum_{j=1}^n u_{ij}\lambda_j^1\Phi\in H
\end{equation*}
and $(u_{ij})_{i,j=1}^{\ns}$ are the entries of the inverse of $(\lambda_{i}^2\lambda_{j}^1\Phi)_{i,j=1}^{\ns}$.
Here, the superscript indicates the variable that is acted on. For example, if $T$ consists of point evaluations in $\{x_1,\dots,x_\ns\}\subset \dom$, then $\lambda_j^1\Phi=\Phi(x_j,\cdot)\in H$ and  $\lambda_{i}^{2}\lambda_{j}^{1}\Phi=\Phi(x_j,x_i)\in\R$.
\end{pro}
\begin{proof}
In view of \Cref{eq:exint} it suffices to note that $T^*v=\sum_{i=1}^\ns \lambda_{i}^{1}\Phi v_i$ for any $v\in\R^\ns$ and therefore $TT^*v=(\sum_{i=1}^\ns  (\lambda_{j}^{2}\lambda_{i}^{1}\Phi)v_i)_{j=1}^\ns$. We have seen in \Cref{thm:opt} that $TT^*$ is invertible. 
\end{proof}

Maybe the most common examples of reproducing kernel Hilbert spaces are the isotropic Sobolev spaces $H^{\reg}(\R^d)$ with $\reg>d/2$, for which point evaluations are continuous by Sobolev's embedding theorem.
We give here a general definition using Fourier transforms that is suited to our interest in multilinear problems. 
For a partition $D=\{D_j\}_{j=1}^n$ of $\{1,\dots,d\}$, with $d_j:=|\pa_j|$, and $\bm{\reg}\in\Rnn^\dt$, we define 
\begin{equation*}
H^{\bm{\reg}}_{\pa}(\R^d):=\{f\in L^2(\R^d) : \|f\|_{H^{\bm{\reg}}_{\pa}(\R^d)}:=\|\prod_{j=1}^\dt (1+|\omega_{\pa_j}|^2)^{\reg_j/2}\Fourier{f} \|_{L^2(\R^d)}<\infty)\},
\end{equation*}
equipped with the obvious inner product, 
where we denote by $|\cdot|_2$ the Euclidean norm, and where we access groups of components of $\omega\in\R^d$ by the subscripts $\pa_j$. Furthermore, for any $\dom\subset \R^d$ we define $H^{\bm{\beta}}_{\pa}(\dom)$ as space of restrictions of functions in $H^{\bm{\reg}}_{\pa}(\R^d)$ with the norm $\|f\|_{H^{\bm{\reg}}_{\pa}(\dom)}:=\inf_{g|_{\dom}=f}\|g\|_{H^{\reg}_{\pa}(\R^d)}$.

Special cases are the isotropic Sobolev spaces, which correspond to $\pa_1=\{1,\dots,d\}$ and $\bm{\reg}=\reg\in\Rnn$, and the Sobolev spaces $H^{\reg}_{\mix}(\R^d)$ of dominating mixed smoothness, which correspond to $\pa_j=\{j\}$  and $\beta_j=\beta\in\Rnn$ for $j\in\{1,\dots,d\}$.

 If $\bm{\reg}\in\Nz^\dt$, then we have the characterization
\begin{equation*}
H^{\bm{\reg}}_{\pa}(\R^d):=\{f\in L^2(\R^d): \forall \bm{\regs}\in\Nz^{d},  |\bm{\alpha}_{\pa_j}|_{1}\leq \reg_j, j\in\{1,\dots,\dt\}: \|\partial_{\bm{\alpha}}f\|_{L^2(\R^d)}<\infty \}.
\end{equation*}

 Furthermore, we see below that the spaces $H^{\bm{\reg}}_D(\Gamma)$ are tensor products of isotropic Hilbert spaces when $\Gamma$ is a product domain. The multilinearity of the tensor product will later allow us to apply the general framework from \Cref{sec:sparse}.
 \begin{definition}[{\cite{Hackbusch2012}}]
 	\label{def:hilbert}
 	If $(H^{(j)},\langle \cdot,\cdot\rangle_{j})$ are Hilbert spaces for $j\in\{1,\dots,\dt\}$, then the unique bilinear extension of 
 	\begin{equation*}
 	\langle \bigotimes_{j=1}^\dt f^{(j)}_{1},\bigotimes_{j=1}^\dt f^{(j)}_{2}\rangle_{H}:=\prod_{j=1}^\dt\langle f^{(j)}_{1},f^{(j)}_{2}\rangle_{j}\quad \forall\; f_i^{(j)}\in H^{(j)}, i\in\{1,2\}, j\in\{1,\dots,\dt\}
 	\end{equation*}
 	is an inner product on their algebraic tensor product. We call the completion of the algebraic tensor product under this inner product \emph{the Hilbert tensor product} and denote it $\bigotimes_{j=1}^\dt H^{(j)}.$
 \end{definition}

\begin{pro}
\label{pro:tensornative}
\begin{enumerate}[(i)]
\item
Let $\{\pa_j\}_{j=1}^\dt$  be a partition of $\{1,\dots,d\}$. If for each $j\in\{1,\dots,n\}$ we have a function $\phi_j\colon\R^{d_j}\to\R$ that satisfies
\begin{equation*}
(1+|\omega|_2^2)^{-\reg_j}\lesssim_{\omega} |\Fourier{\phi_j}(\omega)| \lesssim_{\omega}  (1+|\omega|_2^2)^{-\reg_j},
\end{equation*} 
for some $\reg_j/2>d_j$, then $\Phi(x,y):=\prod_{j=1}^\dt\phi_j(x_{\pa_j}-y_{\pa_j})\colon\R^d\times\R^d\to\R$ is a reproducing kernel with native space $\native{\Phi}{\R^d}\simeq H^{\bm{\reg}}_{\pa}(\R^d)$.

\item If $\Phi\colon \supdom\times \supdom\to\R$ is a reproducing kernel, and $\dom\subset\supdom$, then $\Phi|_{\dom\times \dom}$ is the reproducing kernel of the space of restrictions equipped with the natural norm: 
\begin{equation*}
\native{\Phi|_{\dom\times \dom}}{\dom}=(\{g|_{\dom}: g\in\native{\Phi}{\supdom}\}, \|f\|_{\native{\Phi}{\dom}}=\inf_{g|_{\dom}=f}\|g\|_{\native{\Phi}{\supdom}}).
\end{equation*}
\fxnote{No topologic or geometric restrictions on $\supdom$ and $\dom$ are required.}
\item If $\Phi_j\colon \dom_j\times\dom_j\to\R$ are reproducing kernels for $j\in\{1,\dots,n\}$, then $\Phi:=\bigotimes_{j=1}^\dt\Phi_j$ is a reproducing kernel and $\native{\Phi}{\prod_{j=1}^\dt\dom_j}=\bigotimes_{j=1}^\dt\native{\Phi_j}{\dom_j}$. 
\item Let $\{\pa_j\}_{j=1}^\dt$ be a partition of $\{1,\dots,d\}$ and assume that $\reg_j>d_j/2$ for all $j\in\{1,\dots,\dt\}$.Then 
\begin{equation*}
H^{\bm{\reg}}_{\pa}(\prod_{j=1}^\dt \dom_j)=\bigotimes_{j=1}^\dt H^{\reg_j}(\dom_j).
\end{equation*}
for any $\dom_j\subset\R^{d_j}$, $j\in\{1,\dots,\dt\}$.
\end{enumerate}
\end{pro}
\begin{proof}
\begin{enumerate}[(i)]
\item Follows from \cite[Theorem 10.12]{wendland2004scattered}.
\item This is \cite[Section 5, Theorem 1]{Aronszajn1950}.
\item This is \cite[Section 8, Theorem 1]{Aronszajn1950}.
\item Follows from combining (i) through (iii).
\end{enumerate}
\end{proof}

 A family of functions that satisfy the condition in part (i) are the \emph{Matérn functions} 
 \begin{equation*}
 \phi\colon\R^d\to \R, x\mapsto \phi(x):=\frac{2^{1-\reg}}{\Gamma(\reg)}|x|_2^{\reg-d/2}K_{\reg-d/2}(|x|_2),
 \end{equation*} where $\reg>d/2$, and $K_{\reg-d/2}(r)$ is the modified Bessel function of the second kind of order $\reg-d/2$. Their Fourier transform \emph{equals} $(1+|\omega|_2^2)^{-\reg}$ \cite[Theorem 6.13]{wendland2004scattered}. By parts (ii) through (iii) of the previous proposition, we therefore have explicit expressions of the reproducing kernels of the generalized Sobolev spaces $H^{\bm{\reg}}_{\pa}(\dom)$. In combination with \Cref{pro:pro} this allows the effective computation of best-approximations in these spaces.
 
Error bounds for best-approximations in isotropic Sobolev spaces can be deduced from the \emph{sampling inequality} in \Cref{pro:sampling} below.
For subsets $X\subset\dom$ we denote by 
\begin{equation*}
h_{X,\dom}:=\sup_{y\in \dom}\inf_{x\in X}|x-y|_2
\end{equation*}
the \emph{fill-distance} of $X$ in $\dom$.
 \begin{pro}[{\cite[Theorem 2.6]{WendlandRieger2005}}]
 \label{pro:sampling}
 Let $\dom\subset\R^d$ be a bounded Lipschitz domain. There exists $h_0>0$ such that, for any finite set $X\subset \dom$ with $h_{X,\dom}<h_0$ and any $r\in H^{\reg}(\dom)$, we have
 \begin{equation*}
 \|r\|_{L^2(\dom)}\lesssim_{h,r} h_{X,\dom}^{\reg}\|r\|_{H^{\reg}(\dom)}+\max_{x\in X}|r(x)|
 \end{equation*}
 \noproof
 \end{pro}
 To turn the previous proposition, which is a purely theoretical property of Sobolev functions, into a convergence result for kernel-based interpolation, all that is needed is stability of kernel-based interpolation, $\|S_{X}u\|_{H^{\beta}(\domPS)}\leq \|u\|_{H^{\beta}}$, which follows directly from the definition in \MakeUppercase Equation\nobreakspace \textup {(\ref {eq:app})}. In the following proposition and in the remainder of this work, we denote by
 \begin{equation*}
 \|A\|_{H\to G}:=\sup_{\|h\|_{H}=1}\|Ah\|_{G}
 \end{equation*}
 the operator norm of a linear operator $A\colon H\to G$ between general normed vector spaces $H$ and $G$.
 \begin{pro}
 \label{pro:sobolev}
 Let $\dom\subset\mathbb{R}^d$ be a bounded Lipschitz domain, and let $\Phi$ be a reproducing kernel such that $\native{\Phi}{\dom}\simeq H^{\beta}(\dom)$. For any $0\leq\alpha\leq  \beta $ we have
 \begin{equation*}
 \|\Id-S_{X}\|_{H^\beta(\dom)\to H^{\alpha}(\dom)}\lesssim_{X} h_{X,\dom}^{\beta-\alpha},
 \end{equation*}
 where $S_{X}$ is the best-approximation in $H^{\beta}(\dom)$ associated to point evaluations in $X\subset\dom$.
 \end{pro}
 \begin{proof}
By the Gagliardo-Nirenberg interpolation inequality \cite[Theorem 1]{Nirenberg1966}, it suffices to consider the cases $\alpha=0$ and $\alpha=\beta$. For the first case, consider $u\in H^{\beta}(\dom)$ and apply \Cref{pro:sampling} to $r:=u-S_Xu$. The claim follows because $r|_{X}\equiv 0$ and \begin{equation*}\|r\|_{H^\reg(\dom)}\leq \|u\|_{H^{\reg}(\dom)}+\|Su\|_{H^\reg(\dom)}\leq 2\|u\|_{H^{\reg}(\dom)}\end{equation*} by the definition of $Su$. The second case follows directly from the previous inequality.
 \end{proof}

We now consider the sparse approximation of functions in a native space $\native{\Phi}{\dom}$ on a product domain $\dom:=\prod_{j=1}^\dt\dom_j$ with a tensor product kernel $\Phi:=\Phi_1\otimes\dots\otimes \Phi_\dt\colon \dom\times \dom\to \R$.

Since
\begin{equation*}
\Id_{\native{\Phi}{\dom}}=\Id_{\native{\Phi_1}{\dom_1}}\otimes\dots\otimes\Id_{\native{\Phi_\dt}{\dom_\dt}},
\end{equation*}
 we may apply Smolyak's algorithm to the multilinear tensor product of operators, and we 
 obtain an approximation of the identity operator $\Id_{\native{\Phi}{\dom}}$ that employs point evaluations in a \emph{sparse grid} \cite{smolyak1963quadrature,NovakRitter1996,GerstnerGriebel1998}.
 
 Before we become more specific, let us recall the tensor product of operators.
 \begin{pro}[{\cite[Proposition 4.127]{Hackbusch2012}}]
 		\label{pro:cross}
 		Let  $H^{(j)}$ and $G^{(j)}$, with $j\in\{1,\dots,\dt\}$,  be Hilbert spaces and denote by $H=\bigotimes_{j=1}^\dt H^{(j)}$ and $G=\bigotimes_{j=1}^\dt G^{(j)}$ their Hilbert tensor products. Given operators  $A^{(j)}\in  \mathcal{L}(H^{(j)},G^{(j)})$, $j\in\{1,\dots,\dt\}$  we define the tensor product operator on 
 		the algebraic tensor product of the spaces $H^{(j)}, j\in\{1,\dots,\dt\}$ by 
 		\begin{equation*}
 	\Big(	\bigotimes_{j=1}^\dt A^{(j)}\Big)\Big(\bigotimes_{j=1}^\dt h^{(j)}\Big):=\bigotimes_{j=1}^n A^{(j)}h^{(j)}
 		\end{equation*}
 		and by multilinear extension.
 		This \emph{algebraic} tensor product operator satisfies
 		\begin{equation*}
 		\|\bigotimes_{j=1}^\dt A^{(j)}\|_{H\to G}=\prod_{j=1}^\dt \|A^{(j)}\|_{H^{(j)}\to G^{(j)}}
 		\end{equation*}
 		on its domain of definition and can therefore be extended to the completion $H$ of the algebraic
 		tensor product, maintaining the same bound on the operator norm.
 		\noproof
 	\end{pro}
 	\fxnote{Wenn \cite[Um Lemma 4.33 rum]{Hackbusch2012} recht hat gilt das immer, solange die operator norm endlich ist auf allen moeglichen tensorprodukten (dann definiert sie naemlich eine norm auf dem tensorprodukt der faktoroperatoren)}
 
 Now consider the case where the factors $\native{\Phi_j}{\dom_j}$ are isotropic Sobolev spaces $H^{\reg_j}(\dom_j)$, $\dom_j\subset\R^{d_j}$,
 and we apply Smolyak's algorithm to approximate
  \begin{equation*}
  v:=\bl(\Id_{H^{\reg_1}(\domPS_1)},\dots,\Id_{H^{\reg_\dt}(\domPS_\dt)}):=\Id_{H^{\reg_1}(\domPS_1)}\otimes\dots\otimes \Id_{H^{\reg_\dt}(\domPS_\dt)}=\Id_{H^{\bm{\reg}}_{D}(\domPS)}\in\mathcal{L}(H^{\bm{\reg}}_{D}(\domPS),H^{\bm{\regs}}_{D}(\domPS)).
  \end{equation*}
  Here, we consider $\Id_{H^{\bm{\reg}}_{D}(\domPS)}$ as taking values in $H^{\bm{\regs}}_{D}(\domPS)$ with $\bm{\regs}\leq \bm{\reg}$ in order to later obtain bounds in the norm of $H^{\bm{\regs}}_{D}(\domPS)$.
 To approximate the inputs $\Id_{H^{\reg_j}(\domPS_j)}$ we use best-approximations $S^{(j)}_N$ associated with point evaluations in sets $X^{(j)}_{\ns}\subset\dom_j$ with $|X^{(j)}_{\ns}|=\ns$ and $h_{X^{(j)}_{\ns},\dom_j}\lesssim_{\ns}\ns^{-1/d_j}$.

To be able to apply the general theory of \Cref{sec:sparse}, we need to verify the four assumptions from \Cref{sec:sparse}. Assumption 1 on the continuity of $\bl$ holds by \Cref{pro:cross} above.

Assumption 2 on the convergence of the input approximations has been established in \Cref{pro:sobolev}. Finally, to satisfy Assumptions 3 and 4 we assign as work to each point evaluation a unit cost, that is
\begin{equation*}
\work(S_{\ns^{(j)}}^{(j)}):=\ns^{(j)}  \quad\forall \ns^{(j)}>0, \,j\in\{1,\dots,\dt\}
\end{equation*}
and
\begin{equation*}
\work(S^{(1)}_{\ns^{(1)}}\otimes \dots \otimes S^{(\dt)}_{\ns^{(\dt)}}):=\prod_{j=1}^\dt \ns^{(j)}.
\end{equation*}
This is indeed the number of point evaluations required by the tensor product operator: By \Cref{pro:pro}, $S^{(j)}_{\ns^{(j)}}$ can be written as 
\begin{equation*}
\sum_{i=1}^{\ns^{(j)}} s^{(j)}_{i} \delta_{x^{(j)}_{i}}
\end{equation*}
for $x^{(j)}_{i}\in X^{(j)}_{\ns^{(j)}}$ and some $s^{(j)}_{i}\in \native{\Phi_j}{\dom_j}$, thus,
\begin{equation*}
S^{(1)}_{\ns^{(1)}}\otimes \dots \otimes S^{(\dt)}_{\ns^{(\dt)}}=\sum_{i_1=1}^{\ns^{(1)}}\dots \sum_{i_\dt=1}^{\ns^{(\dt)}}(\bigotimes_{j=1}^\dt s^{(j)}_{i_j}) \delta_{(x^{(1)}_{i_1},\dots,x^{(n)}_{i_n})}.
\end{equation*}
Therefore, \Cref{thm:spkernel} below follows directly from \Cref{thm:central}.

 \begin{thm}\textbf{(Sparse kernel-based approximations)}
 \label{thm:spkernel}
Assume $0\leq \bm{\regs}< \bm{\reg}$. For small enough $\epsilon$, we can choose $L$ such that Smolyak's algorithm with threshold $L$ satisfies
 \begin{equation*}
 \|\Id - \smoll_L(\Id)\|_{H^{\bm{\reg}}_{\pa}(\dom)\to H^{\bm{\regs}}_{\pa}(\dom)}\leq \epsilon
 \end{equation*}
and employs
 \begin{equation}
 \label{eq:kernelWork}
\ns_{\smoll_L(\Id)}\lesssim_{\epsilon} \epsilon^{-\rho}|\log\epsilon|^{(1+\rho)(\dt_0-1)}
 \end{equation}
  point evaluations in $\dom$, where
\begin{equation*}
\rho:=\textstyle{\max_{j=1}^\dt}d_j/(\reg_j-\regs_j)\text{ and }\dt_0:=|\{j:d_j/(\reg_j-\regs_j)=\rho\}|.
\end{equation*}\noproof
 \end{thm}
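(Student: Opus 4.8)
The plan is to obtain the statement as a direct application of \Cref{thm:central} to the multilinear problem
\[
v \;=\; \bl(\Id_{H^{\reg_1}(\domPS_1)},\dots,\Id_{H^{\reg_\dt}(\domPS_\dt)})\;=\;\bigotimes_{j=1}^\dt \Id_{H^{\reg_j}(\domPS_j)}\;=\;\Id_{H^{\bm{\reg}}_{\pa}(\domPS)}\;\in\;\mathcal{L}(H^{\bm{\reg}}_{\pa}(\domPS),H^{\bm{\regs}}_{\pa}(\domPS)),
\]
with factor spaces $\factor^{(j)}=\mathcal{L}(H^{\reg_j}(\domPS_j),H^{\regs_j}(\domPS_j))$, target space $\vsgoal=\mathcal{L}(H^{\bm{\reg}}_{\pa}(\domPS),H^{\bm{\regs}}_{\pa}(\domPS))$, true inputs $\inp^{(j)}=\Id_{H^{\reg_j}(\domPS_j)}$, and input approximations $S^{(j)}_{\ns}$, the best-approximations associated with point evaluations in $X^{(j)}_{\ns}\subset\domPS_j$. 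The identifications $H^{\bm{\reg}}_{\pa}(\domPS)=\bigotimes_{j=1}^\dt H^{\reg_j}(\domPS_j)$ and $H^{\bm{\regs}}_{\pa}(\domPS)=\bigotimes_{j=1}^\dt H^{\regs_j}(\domPS_j)$, which place us in the tensor-product setting of \Cref{sec:sparse}, are supplied by \Cref{pro:tensornative}(iv); a kernel $\Phi_j$ with $\native{\Phi_j}{\domPS_j}\simeq H^{\reg_j}(\domPS_j)$ (e.g.\ a Matérn kernel, via \Cref{pro:tensornative}(i)--(ii)) exists for each $j$, so the $S^{(j)}_{\ns}$ are well defined.

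It remains to verify Assumptions 1--4 of \Cref{sec:sparse}. Assumption 1 holds because $\bl$ is multilinear and, by \Cref{pro:cross}, the operator norm of $\bigotimes_{j=1}^\dt A^{(j)}$ equals $\prod_{j=1}^\dt\|A^{(j)}\|_{H^{\reg_j}(\domPS_j)\to H^{\regs_j}(\domPS_j)}$; hence $\bl$ is continuous with constant $C=1$. Assumption 2 follows from \Cref{pro:sobolev} together with the fill-distance bound $h_{X^{(j)}_{\ns},\domPS_j}\lesssim_{\ns}\ns^{-1/d_j}$: for $0\le\regs_j<\reg_j$,
\[
\|\Id_{H^{\reg_j}(\domPS_j)}-S^{(j)}_{\ns}\|_{H^{\reg_j}(\domPS_j)\to H^{\regs_j}(\domPS_j)}\;\lesssim_{\ns}\;h_{X^{(j)}_{\ns},\domPS_j}^{\reg_j-\regs_j}\;\lesssim_{\ns}\;\ns^{-(\reg_j-\regs_j)/d_j},
\]
so Assumption 2 holds with approximation rate $(\reg_j-\regs_j)/d_j>0$. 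For Assumptions 3 and 4 we assign unit cost to each point evaluation, so that $\work(S^{(j)}_{\ns})=\ns$ (work rate $1$) and $\work(\bigotimes_{j=1}^\dt S^{(j)}_{\ns^{(j)}})=\prod_{j=1}^\dt\ns^{(j)}$; by \Cref{pro:pro} this product equals the number of point evaluations required by the tensor-product operator, namely evaluations on the grid $X^{(1)}_{\ns^{(1)}}\times\dots\times X^{(\dt)}_{\ns^{(\dt)}}$, and the subadditivity demanded in Assumption 4 holds because the point set underlying a sum of such operators is contained in the union of the point sets of the summands.

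With Assumptions 1--4 established, \Cref{pro:convergence} validates the decomposition $\Id=\sum_{\bm{l}\in\Np^\dt}\Delta_{\bm{l}}$ in the operator norm of $\mathcal{L}(H^{\bm{\reg}}_{\pa}(\domPS),H^{\bm{\regs}}_{\pa}(\domPS))$, so that $\smoll_L(\Id)=\sum_{|\bm{l}|_1\le L}\Delta_{\bm{l}}$ is exactly the truncation analyzed in \Cref{thm:central}. Applying \Cref{thm:central} with approximation rates $(\reg_j-\regs_j)/d_j$ and work rates $1$ yields, for every sufficiently small $\epsilon>0$, a threshold $L=L(\epsilon)$ with $\|\Id-\smoll_L(\Id)\|_{H^{\bm{\reg}}_{\pa}(\domPS)\to H^{\bm{\regs}}_{\pa}(\domPS)}\le\epsilon$ and $\work(\smoll_L(\Id))\lesssim_{\epsilon}\epsilon^{-\rho}|\log\epsilon|^{(\dt_0-1)(1+\rho)}$, where $\rho=\max_{j=1}^\dt d_j/(\reg_j-\regs_j)$ and $\dt_0=|\{j:d_j/(\reg_j-\regs_j)=\rho\}|$; since the number $\ns_{\smoll_L(\Id)}$ of point evaluations actually used is at most the assigned work $\work(\smoll_L(\Id))$ (the combination rule \Cref{eq:combinationrule} makes the count explicit), this is precisely \Cref{eq:kernelWork}. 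I do not anticipate a genuine obstacle here: the mathematical substance is carried by \Cref{thm:central}, \Cref{pro:sobolev}, \Cref{pro:cross} and \Cref{pro:tensornative}, and the only points requiring care are bookkeeping ones --- that the operator-valued Smolyak decomposition converges in the intended operator norm (which reduces to the abstract normed-space setting of \Cref{sec:sparse} via the tensor-product identifications above) and that the abstract work functional bounds the concrete count of point evaluations, so that the bound on $\work(\smoll_L(\Id))$ transfers to $\ns_{\smoll_L(\Id)}$.
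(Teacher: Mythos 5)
Your proposal is correct and follows essentially the same route as the paper: the text preceding \Cref{thm:spkernel} verifies Assumptions 1--4 exactly as you do (continuity via \Cref{pro:cross}, convergence via \Cref{pro:sobolev} with the quasi-uniform fill-distance bound, unit cost per point evaluation justified by \Cref{pro:pro}), and the theorem is then stated as a direct corollary of \Cref{thm:central}. Your additional remarks on subadditivity of the evaluation count and on the tensor-product identification via \Cref{pro:tensornative}(iv) are correct bookkeeping that the paper leaves implicit.
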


\begin{rem}\textbf{(Convergence in non-Hilbert norms)}
 The previous theorem is an extension of Theorem 4.41 in \cite{Schreiber2000}, which provides $L^\infty$ bounds for interpolation in an $n$-fold tensor product of a univariate Sobolev space.

To derive bounds on the error in non-Hilbert norms, such as $L^{\infty}$, observe that the proof of \Cref{pro:sobolev} goes through for non-Hilbert Sobolev spaces whose parameters satisfy certain conditions determined by the Gagliardo-Nirenberg inequality. It remains to be checked whether the desired norm is uniform with the native space norm in the sense introduced in \Cref{sec:sparse}. For bounds in the $L^{\infty}$-norm, one can simply use pointwise estimates and the fact that $\R=\R\otimes\dots\otimes \R$ is a Hilbert tensor product.
In this case, the result in \Cref{eq:kernelWork} holds true with 
\begin{equation*}
\rho:=\textstyle{\max_{j=1}^\dt}d_j/(\reg_j-d_j/2)\text{ and }\dt_0:=|\argmax_{j=1}^\dt\{d_j/(\reg_j-d_j/2)\}|.
\end{equation*}
 
\end{rem}
\begin{rem}\textbf{(Exponential convergence)}
	\label{diss} In \cite{GeorgoulisLevesleySubhan2013,DongGeorgoulisLevesleyEtAl2015}, Smolyak's algorithm is applied to approximation with Gaussian kernels (which have exponential univariate convergence rates). Theorem 4.44 in \cite{Schreiber2000} claims that using $h_l:=h_{X^{(j)}_l,\dom_j}\leq \exp(-l)$ yields exponential convergence in terms of the required samples,  $\exp(-c\ns)$, when Smolyak's algorithm is applied to the $n$-fold tensor product of univariate Gaussian kernel native spaces. However, the proof is based on the claim that the univariate interpolants satisfy 
\begin{equation}
\label{eq:faulty}
\|\Id-S^{(j)}_l\|\leq c_{\phi}h_{l}^k\quad \forall l,k\in\N
\end{equation}
 for constants $c_{\phi},C,c$ independent of $k$ and $l$, such that 
 \begin{equation}
 \label{eq:faulty2}
 c_{\phi}h_l^k\leq C\exp(-c h_{l}^{-1}) \quad\forall l,k\in\N,
\end{equation}
 which seems to be incorrect (consider $h_l<1$ and $k\to\infty$). Indeed the Smolyak algorithm with  $\ns^{(j)}_l\approx \exp(t_jl)$ is tailored to the situation of algebraically converging approximations and algebraically diverging work. To group contributions with equal work-to-error ratio as in \Cref{eq:ratio}, if the error converges exponentially and the work grows algebraically, one should use arithmetic subsequences $\ns^{(j)}_{l}\approx ml$. This yields only sub-exponential convergence, $\exp(-c \ns^{1/n})$, as does approximation with Gaussian kernels and quasi-uniform point sets. However, it can be shown that the factor $c=c(n)$ behaves better for large values of $n$ \cite{GriebelOettershagen}.
\end{rem}

\section{Applications to parametric and random PDEs}
\label{sec:UQ}
We apply Smolyak's algorithm to parametric partial differential equations of the form
\begin{equation}
\label{eq:PDE}
\begin{split}
\PDE_y(u_y)=f_y
\end{split}
\end{equation}
where both the partial differential operator  $\PDE_y$ and the right-hand side $f_y$ depend on a parameter $y\in\domPS\subset\mathbb{R}^d$.

 Assuming that there is a unique solution $u_y=\PDE_y^{-1}(f_y)$ for each $y\in\domPS$, our goal is to approximate the dependence of a scalar, possibly nonlinear, quantity of interest $\qoi(u_y)$ on the parameter $y$.

\subsection{Approximation of expectations}
\label{ssec:exp}
In this subsection, we assume that the parameter space $\domPS\subset\R^d$ is equipped with a probability distribution $\pi$, and our goal is to approximate expected values of quantities of interest.
We show how multilevel and multi-index methods can be regarded as applications of Smolyak's algorithm to generalized multilinear approximation problems. While the resulting methods are not new, we obtain streamlined proofs and see that linearity of the underlying PDE is not required to obtain multilinear approximation problems.

Our goal is to compute the expectation $E[\funct(\PDE_y^{-1}(f_y))]=\int \funct(\PDE_y^{-1}(f_y)) d\pi(y)$. 
 Roughly speaking, if suitable regularity results for the operators $\PDE_{y}$, $y\in\domPS$ (and possibly their linearizations) are available and if, for example, $Q$ is linear, then differentiation of Equation (\ref{eq:PDE}) with respect to $y$ shows that
 \begin{align*}
 \fq\colon\domPS&\to \R\\
 y&\mapsto q(y):=\qoi(\PDE_y^{-1}(f_y))
 \end{align*} 
 has similar differentiability properties with respect to $y$ as $\PDE_y$ and $f_y$.
 For rigorous results, consider for example \cite{Kuo2012,harbrecht2013multilevel,ChkifaCohenSchwab2015} or the discussion of our numerical experiment in Section \ref{ssec:numrsr}. For the remainder of this section, we will simply assume that $q\in H$ for some suitable normed vector space $H$ of functions from $\domPS$ to $\vsQOI$.

In practice, we cannot compute $\PDE_y^{-1}$ exactly but instead we have to rely on discretizations $\PDE_{y,\ns^{(2)}}^{-1}$, corresponding to a numerical solver with $\ns^{(2)}$ mesh points and coefficients determined by $y$. This yields approximations $\fq_{\ns^{(2)}}\in\vsPS$ of $\fq$, defined by
\begin{equation*}
\fq_{\ns^{(2)}}(y):=\qoi(\PDE_{y,\ns^{(2)}}^{-1}f_y).
\end{equation*}
Furthermore, we cannot compute approximations to the solution for all values of $y$, but only for $\ns^{(1)}$ samples and then need to rely on quadrature rules $\qr_{\ns^{(1)}}\colon \vsPS\to\R$ based on these samples. A straightforward approximation of $E[\fq]$ is then
\begin{equation}
\label{eq:UQtrivial}
E[\fq]\approx \qr_{\ns^{(1)}}\fq_{\ns^{(2)}}
\end{equation}
 for large $\ns^{(1)}$ and $\ns^{(2)}$, corresponding to many samples of a fine discretization of the PDE.

To obtain approximations that achieve the same error with less work, we observe that 
\begin{align*}
\bbl\colon \mathcal{L}(\vsPS,\R)\times \vsPS&\to\R\\
(\lambda,h)&\mapsto \lambda h
\end{align*}
is a continuous bilinear form and our goal is to approximate 
\begin{equation*}
\goal:=E[\fq]=\bbl(E,\fq)\in\R
\end{equation*}
using the approximations $\qr_{\ns}$ of $E$ and $\fq_{\ns}$ of $\fq$. This is exactly the setting of \Cref{sec:sparse}, with Assumption 1 on the continuity of the bilinear form corresponding to the definition of operator norms. To satisfy Assumptions 2 to 4 on error and work, we assume  
\begin{equation*}
\|E-\qr_{\ns}\|_{\vsPS\to\R}\lesssim_{\ns} \ns^{-\reg},
\end{equation*}  
\begin{equation*}
\|\fq-\fq_{\ns}\|_{H}\lesssim_{\ns} \ns^{-\kappa},
\end{equation*} and that an approximation of the solution of the PDE with a fixed parameter $y\in\domPS$ and $\ns$ mesh points requires the work $\ns^{\gamma}$ for some $\gamma>0$. Furthermore, we associate with $\qr_{\ns^{(1)}}$ the work $\ns^{(1)}$, with $\fq_{\ns^{(2)}}$ the work $(\ns^{(2)})^{\gamma}$, and with $\qr_{\ns^{(1)}}\fq_{\ns^{(2)}}$ the work $\ns^{(1)}(\ns^{(2)})^{\gamma}$ required for $\ns^{(1)}$ calls of the PDE solver with $\ns^{(2)}$ mesh points.
 \Cref{thm:central} now shows that for small enough $\epsilon$, we may choose $L$ such that Smolyak's algorithm
\begin{equation}
\label{eq:ML}
\smol:=\sum_{|\bm{l}|_1\leq L}\Delta_{l_1}^{(1)}\Delta_{l_2}^{(2)}=\sum_{l=1}^{L-1} \qr_{l}\Delta^{(2)}_{L-l}
\end{equation}
achieves an error of size $\epsilon>0$ with work 
\begin{equation}
\label{eq:MLres}
\epsilon^{-\rho}|\log\epsilon|^{(1+\rho)(n_0-1)},
\end{equation}
where 
\begin{equation*}
\rho:=\max\{\gamma/\kappa,1/\beta\},n_0:=|\argmax\{\gamma/\kappa,1/\beta\}|.
\end{equation*}
Using the straightforward approximation from \Cref{eq:UQtrivial} instead would require the work $\epsilon^{-\gamma/\kappa-1/\beta}$.

\Cref{eq:ML} is the celebrated \emph{multilevel formula} \cite{giles2008multilevel} and \Cref{eq:MLres} agrees with the work analysis in \cite{TeckentrupJantschWebsterEtAl2015}. Strictly speaking, when the so called weak convergence $E[q_N]\to E[q]$ occurs at a faster rate than the strong convergence $q_N\to q$, and when additionally $\gamma/\kappa>1/\beta$, slightly improved rates can be proven by a more elaborate analysis. We show in \Cref{sub:OUU} how such random sampling and corresponding probabilistic results can be obtained in the framework of multilinear approximation problems. First, we expand on the case of deterministic quadrature rules on domains in $\R^d$. Here, assuming separable probability densities, we can interpret the multidimensional integral operator as a tensor product of lower-dimensional integral operators and this multilinear structure allows for further sparsification of the approximation. More specifically, we consider the case where the integration domain is a cartesian product, $\domPS=\Pi_{j=1}^n\domPS^{(j)}$ with $\domPS^{(j)}\subset\R^{d_j}$, and where
\begin{equation*}
\vsPS=H^{\bm{\reg}}_{\pa}(\domPS)=\bigotimes_{j=1}^\dt H^{\reg_j}(\domPS^{(j)}),
\end{equation*} which is a tensor product Sobolev space as in \Cref{sec:kernel}. Furthermore, we assume that the distribution $\pi$ of $y$ is separable, $\pi=\prod_{j=1}^n \pi^{(j)}$. Since the operators $\bigotimes_{j=1}^\dt \int_{\domPS^{(j)}}d\pi^{(j)}$ and $\int_{\domPS}d\pi$ agree on elementary tensors by Fubini's theorem, they are equal, and we may consider the multilinear approximation problem

\begin{equation*}
E[\fq]=\left(\bigotimes_{j=1}^\dt \int_{\domPS^{(j)}}d\pi^{(j)}\right) \fq=:\bl(\int_{\domPS^{(1)}}d\pi^{(1)},\dots,\int_{\domPS^{(n)}}d\pi^{(n)},\fq).
\end{equation*}

This time we form Smolyak's algorithm based on the $(n+1)$-linear map $\bl$ and on quadrature rules $\qr^{(j)}_{\ns}\colon H^{\reg_j}(\domPS^{(j)})\to\R$.
We maintain the assumption that 
 \begin{equation}
 \label{eq:Qconvergence}
 \|\fq-\fq_{\ns}\|_{H^{\bm{\reg}}_{\pa}(\domPS)}\lesssim_{\ns} \ns^{-\kappa}.
 \end{equation}
and further assume that
\begin{equation}
\label{eq:QRconvergence}
\|\int_{\domPS^{(j)}}d\pi^{(j)}-\qr^{(j)}_{\ns}\|_{H^{\reg_j}(\domPS^{(j)})\to\R}\lesssim_{\ns} \ns^{-\beta_j/d_j}
\end{equation}
for $j\in\{1,\dots,\dt\}$.
 For example, if $\pi^{(j)}$ has a bounded density with respect to the Lebesgue measure, we can use for $\qr^{(j)}_{N}$ the integral over the kernel-based best-approximation associated with point evaluations in $Y^{(j)}_{\ns}\subset\domPS^{(j)}$ with  $h_{Y^{(j)}_{\ns},\domPS^{(j)}}\lesssim_{\ns} \ns^{-1/d_j}$. In any case, we assume that the resulting quadrature points and weights are calculated beforehand. 
 Smolyak's algorithm in this setting yields the Multi-index Stochastic Collocation method, which was introduced in  \cite{Haji-AliNobileTamelliniEtAl2015,Haji-AliNobileTamelliniEtAl2015a}. By presenting this method in the general framework of Smolyak's algorithm, we obtain a succinct proof of its convergence. Indeed, the convergence rate in \Cref{thm:main} below agrees with that in \cite[Theorem 1]{Haji-AliNobileTamelliniEtAl2015}.
\begin{thm}
\label{thm:main}
Let $\fq_N$, $\qr_N$, and $\bl$ be as above. In particular, assume that the estimates in \Cref{eq:Qconvergence,eq:QRconvergence} hold and that each call of the PDE solver to obtain a sample $\fq_{N}(y)$, $y\in\domPS$, requires the computational work $\tau\lesssim_{N}\ns^{\gamma}$. For small enough $\epsilon>0$, we can choose $L$ such that Smolyak's algorithm $\smoll_L:=\smoll_L(E[\fq])$ with work parameters $(1,\dots,1,\gamma)$ and convergence parameters $(\beta_1/d_1,\dots,\beta_{\dt}/d_{\dt},\kappa)$achieves
\begin{equation*}
|E[\fq]-\smoll_L|\leq \epsilon
\end{equation*}
with the computational work bounded by
\begin{equation}
\label{eq:workthm}
\work(\smoll_L)\lesssim_{\epsilon}\epsilon^{-\rho}|\log\epsilon|^{(1+\rho)(\dt_0-1)},
\end{equation}
where 
\begin{align*}
\rho:=\max \{d_1/\beta_1,\dots,d_\dt/\beta_{\dt},\gamma/\kappa\}\\
\intertext{and}
 \dt_0=|\argmax \{d_1/\beta_1,\dots,d_\dt/\beta_\dt,\gamma/\kappa\}|.
 \end{align*}
\end{thm}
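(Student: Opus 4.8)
The plan is to derive \Cref{thm:main} as an immediate consequence of \Cref{thm:central}, applied to the $(\dt+1)$-linear approximation problem $\goal=E[\fq]=\bl(\int_{\domPS^{(1)}}d\pi^{(1)},\dots,\int_{\domPS^{(\dt)}}d\pi^{(\dt)},\fq)$; thus the whole argument reduces to verifying Assumptions 1--4 of \Cref{sec:sparse} for this $(\dt+1)$-input problem, with convergence parameters $(\beta_1/d_1,\dots,\beta_\dt/d_\dt,\kappa)$ and work parameters $(1,\dots,1,\gamma)$, and then reading off the resulting $\rho$ and $\dt_0$.

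For Assumption 1, I would take $\factor^{(j)}:=\mathcal{L}(H^{\reg_j}(\domPS^{(j)}),\R)$ for $j\in\{1,\dots,\dt\}$ and $\factor^{(\dt+1)}:=H^{\bm{\reg}}_{\pa}(\domPS)$, and invoke \Cref{pro:cross} together with the identification $\bigotimes_{j=1}^\dt H^{\reg_j}(\domPS^{(j)})=H^{\bm{\reg}}_{\pa}(\domPS)$ from \Cref{pro:tensornative}(iv): for $\lambda_j\in\factor^{(j)}$ and $h\in\factor^{(\dt+1)}$, the operator $\lambda_1\otimes\dots\otimes\lambda_\dt$ lies in $\mathcal{L}(H^{\bm{\reg}}_{\pa}(\domPS),\R)$ with norm $\prod_{j=1}^\dt\|\lambda_j\|$, so $|\bl(\lambda_1,\dots,\lambda_\dt,h)|=|(\lambda_1\otimes\dots\otimes\lambda_\dt)(h)|\leq\prod_{j=1}^\dt\|\lambda_j\|\,\|h\|$; hence $\bl$ is continuous with constant $1$. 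Assumption 2 is exactly the pair of hypotheses \Cref{eq:QRconvergence} (giving exponent $\beta_j/d_j$ for the first $\dt$ inputs) and \Cref{eq:Qconvergence} (giving exponent $\kappa$ for the last input); for the concrete choice $\qr^{(j)}_{\ns}=(\int_{\domPS^{(j)}}d\pi^{(j)})\circ S^{(j)}_{\ns}$ with $h_{Y^{(j)}_{\ns},\domPS^{(j)}}\lesssim_{\ns}\ns^{-1/d_j}$, the bound \Cref{eq:QRconvergence} follows by writing $\int_{\domPS^{(j)}}d\pi^{(j)}-\qr^{(j)}_{\ns}=(\int_{\domPS^{(j)}}d\pi^{(j)})\circ(\Id-S^{(j)}_{\ns})$ and combining boundedness of the density of $\pi^{(j)}$ with \Cref{pro:sobolev} for $\alpha=0$.

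The only point that genuinely needs care is the work model, i.e. Assumptions 3 and 4. Here I would spell out that, by \Cref{pro:pro}, $\qr^{(j)}_{\ns}$ is a linear combination of $\ns$ point evaluations, so assigning it the work $\ns$ (work exponent $\gamma_j=1$) merely counts its nodes, while $\fq_{\ns}$ is assigned the work $\ns^\gamma$, the cost of a single PDE solve on $\ns$ mesh points. For a fixed multi-index $\bm{l}$, evaluating $\qr^{(1)}_{\ns^{(1)}_{l_1}}\otimes\dots\otimes\qr^{(\dt)}_{\ns^{(\dt)}_{l_\dt}}$ on $\fq_{\ns^{(\dt+1)}_{l_{\dt+1}}}$ requires one PDE solve with $\ns^{(\dt+1)}_{l_{\dt+1}}$ mesh points at each of the $\prod_{j=1}^\dt\ns^{(j)}_{l_j}$ nodes of the product grid, for a total of $\prod_{j=1}^\dt\ns^{(j)}_{l_j}\cdot(\ns^{(\dt+1)}_{l_{\dt+1}})^\gamma$ work units --- precisely the product of the componentwise works, which is the multiplicativity required in Assumption 4; subadditivity when summing the $\Delta_{\bm{l}}$ over $|\bm{l}|_1\leq L$ holds because the union of the node sets involved has cardinality at most the sum of the individual cardinalities (an over-count, which only strengthens the bound).

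With Assumptions 1--4 established, \Cref{thm:central} applies and yields $|E[\fq]-\smoll_L|\lesssim_\epsilon\epsilon$ together with $\work(\smoll_L)\lesssim_\epsilon\epsilon^{-\rho}|\log\epsilon|^{(\dt_0-1)(1+\rho)}$, where, with the index $j$ ranging over $\{1,\dots,\dt+1\}$, $\rho=\max_j\gamma_j/\beta_j=\max\{d_1/\beta_1,\dots,d_\dt/\beta_\dt,\gamma/\kappa\}$ and $\dt_0=|\{j:\gamma_j/\beta_j=\rho\}|=|\argmax\{d_1/\beta_1,\dots,d_\dt/\beta_\dt,\gamma/\kappa\}|$, which is the claim. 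I do not anticipate any real obstacle: the mathematical substance lives entirely in \Cref{thm:central} and in \Cref{pro:cross}, and the one thing to be careful about is exactly this bookkeeping that matches the abstract ``work'' of \Cref{sec:sparse} to the number of weighted PDE solves that the algorithm actually performs.
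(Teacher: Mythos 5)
Your proof is correct and takes essentially the same route as the paper's: both reduce the statement to \Cref{thm:central} by verifying Assumptions 1--4 of \Cref{sec:sparse} for the $(\dt+1)$-linear map $\bl$, using \Cref{pro:cross} for continuity, \Cref{eq:Qconvergence,eq:QRconvergence} for Assumption 2, and assigning to the evaluation of $\bl(\qr^{(1)}_{\ns^{(1)}},\dots,\qr^{(\dt)}_{\ns^{(\dt)}},\fq_{\ns^{(\dt+1)}})$ the work $(\prod_{j=1}^\dt\ns^{(j)})(\ns^{(\dt+1)})^{\gamma}$ of $\prod_{j=1}^\dt\ns^{(j)}$ PDE solves with $\ns^{(\dt+1)}$ mesh points. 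The additional details you supply (deriving \Cref{eq:QRconvergence} for the kernel-based quadrature and justifying subadditivity) are consistent with, but not required by, the paper's argument, which simply assumes these properties.
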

\begin{proof}
We check the assumptions of \Cref{sec:sparse}. For this purpose, we view $\bl$ as a multilinear map 
\begin{equation*}
\bl\colon\left(\prod_{j=1}^\dt \mathcal{L}(H^{\reg_j}(\domPS_j), \R)\right)\times H^{\bm{\reg}}_{\pa}(\domPS)\to \R.
\end{equation*}
 Assumption 1 on the continuity of $\bl$ follows from \Cref{pro:cross} together with the definition of the operator norm: For arbitrary elements $\lambda^{(j)}\in \mathcal{L}(H^{\reg_j}(\domPS_j), \R)$ and $h\in H^{\bm{\reg}}_{\pa}(\domPS)$ we have
\begin{equation*}
\begin{split}
|\bl(\lambda^{(1)},\dots,\lambda^{(\dt)},h)|& \leq \|\bigotimes_{j=1}^\dt \lambda^{(j)}\|_{H^{\bm{\reg}}_{\pa}(\domPS)\to\R} \|h\|_{H^{\bm{\reg}}_{\pa}(\domPS)}\\
&\leq \prod_{j=1}^\dt \|\lambda^{(j)}\|_{H^{\reg_j}(\domPS_j)\to \R} \|h\|_{H^{\bm{\reg}}_{\pa}(\domPS)}.
\end{split}
\end{equation*}

Assumption 2 follows from \Cref{eq:Qconvergence,eq:QRconvergence}. Finally, we assign as work to $\qr^{(j)}_{\ns}$ the number of required point evaluations $\ns$, to $\fq_{\ns}$ the computational work $\ns^{\gamma}$, and to the evaluation of 
\begin{equation*}
\bl(\qr^{(1)}_{\ns^{(1)}},\dots,\qr^{(\dt)}_{\ns^{(\dt)}},\fq_{\ns^{(\dt+1)}})
\end{equation*}
the product 
 \begin{align*}
(\prod_{j=1}^\dt \ns^{(j)})(\ns^{(\dt+1)})^{\gamma},
\end{align*}
which is the computational work required by $\prod_{j=1}^\dt \ns^{(j)}$ calls of the PDE solver with $\ns^{(\dt+1)}$ mesh points. Therefore, all Assumptions of \Cref{sec:sparse} are satisfied and the claim follows from \Cref{thm:central}.
\end{proof}

\subsection{Response surface approximation}
\label{sub:RSR}

Our general formulation of Smolyak's algorithm allows us to extend the multilevel and multi-index methods of the previous subsection to the approximation of the full response surface $\fq\colon\domPS\to\R$ without much effort, provided we have an interpolation method which converges to the identity with an algebraic rate. We give below a result using kernel-based approximations (see \Cref{sec:kernel}).

As before, we assume that $\fq\in H^{\bm{\reg}}_{\pa}(\domPS)$, where $\domPS=\Pi_{j=1}^{\dt}\domPS^{(j)}$ and $\domPS^{(j)}\subset\R^{d_j}$ are Lipschitz domains and $\bm{\reg}=(\reg_1,\dots,\reg_{\dt})>(d_1/2,\dots,d_{\dt}/2)$. To apply Smolyak's algorithm we observe that
\begin{equation*}
\fq=\Id \fq=\left(\bigotimes_{j=1}^\dt \Id^{(j)}\right) \fq=:\bl(\Id^{(1)},\dots,\Id^{(\dt)},\fq),
\end{equation*}
where $\Id^{(j)}$, $j\in\{1,\dots,\dt\}$, is the identity on $H^{\reg_j}(\domPS_j)$, which we approximate by the best-approximations $S^{(j)}_{N}$ from \Cref{sec:kernel} based on evaluations in $Y^{(j)}_{N}\subset \domPS^{(j)}\subset\R^{d_j}$ such that $h_{Y^{(j)}_{N},\domPS^{(j)}}\lesssim_{N} N^{-1/d_j}$.
\begin{thm}
\label{thm:RSR}
Suppose that we have convergence $\fq_{N}\to \fq$ as specified in \Cref{eq:Qconvergence}, and that each call of the PDE solver to obtain a sample $q_N(y)$, $y\in\domPS$, requires the computational work $\tau\lesssim_{N}\ns^{\gamma}$. Let $0\leq \bm{\regs}< \bm{\reg}$. 
 For small enough $\epsilon>0$ we can choose $L$ such that Smolyak's algorithm $\smoll_L:=\smoll_L(\fq)$ with work parameters $(1,\dots,1,\gamma)$ and convergence parameters $((\reg_1-\regs_1)/d_1,\dots,(\reg_{\dt}-\regs_{\dt})/d_{\dt},\kappa)$ satisfies
\begin{equation*}
\|\fq-\smoll_{L}\|_{H^{\bm{\regs}}_{\pa}(\dom))}\leq \epsilon
\end{equation*}
and such that the computational work spent on calls of the PDE solver is bounded by
\begin{equation}
\label{eq:RSRwork}
W(\smoll_L)\lesssim_{\epsilon}\epsilon^{-\rho}|\log\epsilon|^{(1+\rho)(\dt_0-1)},
\end{equation}
where 
\begin{align*}
\rho:=\max \{d_1/(\beta_1-\alpha_1),\dots,d_\dt/(\beta_\dt-\alpha_\dt),\gamma/\kappa\}\\
\intertext{and}
 \dt_0=|\argmax \{d_1/(\beta_1-\alpha_1),\dots,d_\dt/(\beta_\dt-\alpha_\dt),\gamma/\kappa\}|.
 \end{align*}
\end{thm}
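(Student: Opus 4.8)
The plan is to reduce \Cref{thm:RSR} to a direct application of \Cref{thm:central} by verifying that the four assumptions of \Cref{sec:sparse} hold for the $(\dt+1)$-linear map $\bl(\Id^{(1)},\dots,\Id^{(\dt)},\fq)$, exactly as was done in the proof of \Cref{thm:main}. The only genuine differences from that proof are that the identity operators $\Id^{(j)}$ are now approximated by the kernel best-approximations $S^{(j)}_N$ rather than by quadrature functionals, that the target norm is $\|\cdot\|_{H^{\bm{\regs}}_{\pa}(\dom)}$ instead of the absolute value on $\R$, and that the convergence exponents for the first $\dt$ inputs are $(\reg_j-\regs_j)/d_j$ instead of $\beta_j/d_j$.

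First I would set up the multilinear map. Regard $\bl$ as a map
\begin{equation*}
\bl\colon\Big(\prod_{j=1}^{\dt}\mathcal{L}(H^{\reg_j}(\domPS_j),H^{\regs_j}(\domPS_j))\Big)\times H^{\bm{\reg}}_{\pa}(\domPS)\to H^{\bm{\regs}}_{\pa}(\domPS),
\end{equation*}
sending $(A^{(1)},\dots,A^{(\dt)},h)$ to $\big(\bigotimes_{j=1}^{\dt}A^{(j)}\big)h$. Here I use \Cref{pro:tensornative}(iv) to identify $H^{\bm{\reg}}_{\pa}(\domPS)=\bigotimes_{j=1}^{\dt}H^{\reg_j}(\domPS_j)$ and likewise for $\bm{\regs}$, which is legitimate since $\bm{\reg}>(d_1/2,\dots,d_\dt/2)$ and $\bm{\regs}\geq 0$ with $\bm{\regs}<\bm{\reg}$. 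Then Assumption~1 (continuity) follows from \Cref{pro:cross}: for arbitrary operators $A^{(j)}$ and $h\in H^{\bm{\reg}}_{\pa}(\domPS)$ one has
\begin{equation*}
\Big\|\Big(\bigotimes_{j=1}^{\dt}A^{(j)}\Big)h\Big\|_{H^{\bm{\regs}}_{\pa}(\domPS)}\leq \Big(\prod_{j=1}^{\dt}\|A^{(j)}\|_{H^{\reg_j}(\domPS_j)\to H^{\regs_j}(\domPS_j)}\Big)\|h\|_{H^{\bm{\reg}}_{\pa}(\domPS)}.
\end{equation*}
Assumption~2 (componentwise approximability) follows from \Cref{pro:sobolev} for the first $\dt$ inputs, which gives $\|\Id^{(j)}-S^{(j)}_N\|_{H^{\reg_j}(\domPS_j)\to H^{\regs_j}(\domPS_j)}\lesssim_N h_{Y^{(j)}_N,\domPS^{(j)}}^{\reg_j-\regs_j}\lesssim_N N^{-(\reg_j-\regs_j)/d_j}$ by the fill-distance assumption, and from \Cref{eq:Qconvergence} for the last input. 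Assumptions~3 and 4 (work) are handled precisely as in the proof of \Cref{thm:main}: assign to $S^{(j)}_N$ the unit cost $N$ per point evaluation, to $\fq_N$ the cost $N^{\gamma}$, and to $\bl(S^{(1)}_{\ns^{(1)}},\dots,S^{(\dt)}_{\ns^{(\dt)}},\fq_{\ns^{(\dt+1)}})$ the product $(\prod_{j=1}^{\dt}\ns^{(j)})(\ns^{(\dt+1)})^{\gamma}$; subadditivity and multiplicativity of this work functional are immediate, and by \Cref{pro:pro} this product is indeed the number of PDE solver calls needed, since the tensor-product best-approximation evaluates $\fq_{\ns^{(\dt+1)}}$ on the product grid $Y^{(1)}_{\ns^{(1)}}\times\dots\times Y^{(\dt)}_{\ns^{(\dt)}}$. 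With convergence parameters $((\reg_1-\regs_1)/d_1,\dots,(\reg_\dt-\regs_\dt)/d_\dt,\kappa)$ and work parameters $(1,\dots,1,\gamma)$, \Cref{thm:central} then yields both the error bound $\|\fq-\smoll_L\|_{H^{\bm{\regs}}_{\pa}(\dom)}\leq\epsilon$ and the work bound \Cref{eq:RSRwork} with the stated $\rho$ and $\dt_0$, since $\gamma_j/\beta_j=d_j/(\reg_j-\regs_j)$ for $j\leq\dt$ and $\gamma_{\dt+1}/\beta_{\dt+1}=\gamma/\kappa$.

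I do not expect any serious obstacle: the proof is essentially a bookkeeping exercise parallel to \Cref{thm:main}, with \Cref{pro:sobolev} supplying the one new ingredient (the Sobolev-norm error estimate for kernel interpolation, already proved in the excerpt). The only point requiring a little care is the tensor-product identification of $H^{\bm{\regs}}_{\pa}(\domPS)$ as the codomain and checking that \Cref{pro:cross} applies with codomains $H^{\regs_j}(\domPS_j)$ that genuinely differ from the domains $H^{\reg_j}(\domPS_j)$; but \Cref{pro:cross} is stated for arbitrary pairs of Hilbert spaces, so this is automatic. One should also note that the work bound \Cref{eq:RSRwork} counts only PDE solver calls, as stated, so the cost of assembling the kernel interpolants (solving the small linear systems of \Cref{pro:pro}) is not included in $W(\smoll_L)$; this matches the corresponding convention in \Cref{thm:main}.
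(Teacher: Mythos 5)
Your proposal is correct and follows essentially the same route as the paper's own proof: view $\bl$ as the $(\dt+1)$-linear map $\big((A^{(j)})_{j},h\big)\mapsto\big(\bigotimes_j A^{(j)}\big)h$, verify continuity via \Cref{pro:cross}, get the input convergence rates from \Cref{pro:sobolev} and \Cref{eq:Qconvergence}, assign the multiplicative work $(\prod_j \ns^{(j)})(\ns^{(\dt+1)})^{\gamma}$, and invoke \Cref{thm:central}. The additional care you take with the tensor-product identification of the codomain and with the convention that only PDE solver calls are counted matches the paper's treatment.
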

\begin{proof}
As before, Assumption 1 of \Cref{sec:sparse} holds since the multilinear map
\begin{equation*}
\begin{split}
\bl\colon\left(\prod_{j=1}^\dt \mathcal{L}(H^{\reg_j}(\domPS_j), H^{\regs_j}(\domPS_j))\right)&\times H^{\bm{\reg}}_{\pa}(\domPS)\to H^{\bm{\regs}}_{\pa}(\domPS)\\
\left((A^{(j)})_{j=1}^n,h\right)&\mapsto \Big(\bigotimes_{j=1}^n A^{(j)}\Big)h
\end{split}
\end{equation*}
is continuous by \Cref{pro:cross}.

Assumption 2 of \Cref{sec:sparse} holds for $\fq_{\ns}$ by assumption and for $S^{(j)}_{\ns}$, $j\in\{1,\dots,\dt\}$, by \Cref{pro:sobolev}, which states that
\begin{equation*}
\|\Id^{(j)}-S^{(j)}_{\ns}\|_{H^{\reg_j}(\domPS^{(j)})\to H^{\regs_j}(\domPS^{(j)})}\lesssim_{N} N^{-(\reg_j-\regs_j)/d_j}.
\end{equation*}
 Finally, we associate as work to $S^{(j)}_{\ns}$ the number of required point evaluations $\ns$, to $\fq_{\ns}$ the work $\ns^{\gamma}$, and to the evaluation of 
\begin{equation*}
\bl(S^{(1)}_{\ns^{(1)}},\dots,S^{(\dt)}_{\ns^{(\dt)}},q_{\ns^{(\dt+1)}})
\end{equation*}
the product 
 \begin{align*}
(\prod_{j=1}^\dt \ns^{(j)})(\ns^{(\dt+1)})^{\gamma},
\end{align*}
which is the computational work required for $\prod_{j=1}^\dt \ns^{(j)}$ samples of $\fq_{\ns^{(\dt+1)}}$. Therefore all Assumptions of \Cref{sec:sparse} are satisfied and the claim follows from \Cref{thm:central}.
\end{proof}

\begin{rem}\textbf{(Total work)}
 In practice, additionally to calls of the PDE solver, we need to determine the elements $s^{(j)}_{i,N}\in H^{\reg_j}(\domPS_j)$ in
 \begin{equation}
 S^{(j)}_{N}f=\sum_{i=1}^N s^{(j)}_{i,N} f(y^{(j)}_{i,N}),
 \end{equation}
 using \Cref{pro:pro}. Denote the computational work required for this computation by $\tau_{S}$ and denote by $\tau_{P}$ the computational work required for all calls of the PDE solver, for which we use the estimate 
\begin{equation}
\label{eq:taup}
\tau_{P}\lesssim_{L}\exp(Lg)L^{\dt_0-1}
\end{equation}
from \Cref{eq:work}, where $g=\max\bm{g}$ and $\dt_0=|\argmax \bm{g}|$ with 
\begin{equation*}
\bm{g}=(1/(1+(\beta_1-\regs_1)/d_1),\dots,1/(1+(\reg_{\dt}-\regs_\dt)/d_\dt),\gamma/(\gamma+\kappa)).
\end{equation*}

If we can solve the linear systems of \Cref{pro:pro} of size $N$ in time $N^{\lambda}$, then (recall the definition $N^{(j)}_{l}:=\exp(l/(1+(\beta_j-\regs_j)/d_j))$ from \Cref{sec:sparse})
\begin{equation}
\label{eq:taus}
\tau_{S}=\sum_{j=1}^{\dt}\sum_{l=1}^L \left(\ns^{(j)}_{l}\right)^{\lambda}\lesssim_{L}\sum_{l=1}^L \exp(l\tilde{g})\lesssim_{L}\exp(L\tilde{g}),
\end{equation}
where $\tilde{g}:=\lambda \max_{j=1}^\dt \{1/(1+(\beta_{j}-\regs_j)/d_j))\}$. Comparison of \Cref{eq:taup,eq:taus} shows that $\tau_{S}$ is negligible if 
\begin{equation*}
\tilde{g}\leq \gamma/(\gamma+\kappa).
\end{equation*}

However, our numerical experiments in Section\nobreakspace \textup {\ref {sec:numerics}} show that even in cases where $\tilde{g}>\gamma/(\gamma+\kappa)$, the work $\tau_{S}$ for the solution of the interpolation equations of a one-dimensional problem may be negligible compared to the cost $\tau_{P}$ of obtaining samples using PDE solvers in practical regimes of computation.  Intuitively, this may be explained by the fact that inverting the kernel matrices corresponding to one or two dimensional interpolation problems comes with no overhead, especially compared to the calls of the PDE solver, which requires meshing, preconditioning, etc.
\end{rem}

\subsection{Optimization under uncertainty}
\label{sub:OUU}
We now consider the case where the parameter $y=(z,m)$ in \Cref{eq:PDE} can be split into a deterministic component $z$ and a random component $m$. Taking expectation over the random component and optimizing over the deterministic one then gives rise to a problem of optimization under uncertainty \cite{Sahinidis2004,Shapiro2008,ShapiroDentchevaothers2014,AlexanderianPetraStadlerEtAl2016}.

We assume that $m$ is a random element over a probability space $(\Omega,\mathcal{A},P)$ with values in a possibly infinite-dimensional Banach space. 

Our goal is to solve the minimization problem
\begin{equation*}
\min_{z\in\domPS}\goal(z)+\psi(z),
\end{equation*}
where 
\begin{equation*}
\goal(z):=E[\qoi(u_{(z,m)})]
\end{equation*} is the expected value with respect to $P$, and $\psi$ depends only on the deterministic parameter $z$, acting as a penalty term for large $z$  for example. 
Difficulties in this minimization problem arise for similar reasons as before: 
\begin{itemize}
\item Given $z$ and $m$, we can only compute approximations $u_{(z,m),N}:=P^{-1}_{(z,m),N}f_{(z,m)}$ to the solution of the PDE.
\item Since $m$ is a random element, we need to rely on sampling strategies to approximate the expected value. In this section we show how Monte Carlo sampling can be included in the framework of general multilinear problems; however, deterministic sampling strategies as in \Cref{ssec:exp} may be applied alternatively.
\item As a consequence we can only compute approximations of $\goal(z)$, and we can only do so for few values of $z$. 
\end{itemize}

To address these issues, we propose a method for the computation of surrogate models that converge to $Q$ with high probability. These surrogate models are given in terms of their coefficients with respect to a basis of kernel functions. As such they can be evaluated with relatively low computational effort and minimized by standard techniques.

 As in previous sections, we assume that $\domPS=\prod_{j=1}^n \domPS^{(j)}$ for Lipschitz domains $\domPS^{(j)}\subset\R^{d_j}$, $\goal\in H^{\bm{\reg}}_{\pa}(\domPS)$ for some partition $D$ of $\{1,\dots,d\}$, and $\bm{\reg}=(\reg_1,\dots,\reg_{\dt})>(d_1/2,\dots,d_{\dt}/2)$ to assure that pointwise evaluations are possible. Furthermore, we assume that for $P$-almost all $\omega\in\Omega$ we have 
\begin{equation*}
\tilde{\goal}(\omega):=(z\mapsto \qoi(u_{(z,m(\omega))})\in H^{\bm{\reg}}_{\pa}(\domPS)
\end{equation*}
and 
$\tilde{\goal}$ is an element of the Bochner space $L^2(\Omega;H^{\bm{\reg}}_{\pa}(\domPS))$.
To obtain a multilinear approximation problem, we consider the problem of approximating the constant $H^{\bm{\reg}}_{\pa}(\domPS)$-valued random variable
\begin{equation*}
\goal=\bl(\Id^{(1)},\dots,\Id^{(n)},E,\tilde{\goal}):=\left(\Id^{(1)}\otimes\dots \otimes\Id^{(n)}\right)E[\tilde{\goal}]\in L^2(\Omega^{\N};H^{\bm{\reg}}_{\pa}(\domPS)),
\end{equation*}
where 
\begin{itemize}
\item $(\Omega^{\N},\mathcal{A}^{\N},P^{\N})$ is the $\N$-fold product probability space with product measure, which represents a sequence of independent and identically distributed draws of $m$ 
\item we regard expectation as an operator
\begin{equation*}
E\colon L^2(\Omega;H^{\bm{\reg}}_{\pa}(\domPS))\to L^2(\Omega^{\N};H^{\bm{\reg}}_{\pa}(\domPS)),
\end{equation*}
which maps elements of the Bochner space $L^2(\Omega;H^{\bm{\reg}}_{\pa}(\domPS))$ to their expected value, regarded as deterministic element of $L^2(\Omega^{\N};H^{\bm{\reg}}_{\pa}(\domPS))$
\item $\Id^{(j)}$ is the identity on $H^{\reg_j}(\domPS_j)$ for $j\in\{1,\dots,\dt\}$.

\end{itemize}

\vspace{1em}
To apply Smolyak's algorithm, we use the following approximations
\begin{itemize}
\item  To approximate $\tilde{\goal}$, we use the random variable
\begin{equation*}
\tilde{\goal}_N(\omega):=(z\mapsto \qoi(u_{(z,m(\omega)),N}))
\end{equation*}
\item  To approximate the expectation operator $E$, we use the empirical mean operators
\begin{align*}
\emp_{N}\colon L^2(\Omega;H^{\bm{\reg}}_{\pa}(\domPS))&\to L^2(\Omega^{\N};H^{\bm{\reg}}_{\pa}(\domPS)),\\
(\emp_{N}X)(\bm{\omega})&:=\frac{1}{N}\sum_{i=1}^N X(\omega_i)\quad\forall \bm{\omega}\in \Omega^{\N}
\end{align*}
\item  To approximate the identities $\Id^{(j)}$, we use the best-approximations $S^{(j)}_{N}$ from \Cref{sec:kernel} based on sets $Z^{(j)}_{N}\subset \domPS^{(j)}\subset\R^{d_j}$ with fill-distances
\begin{equation*}
	h_{Z^{(j)}_{N},\Gamma^{(j)}}\lesssim_{N} N^{-1/d_j}.
\end{equation*}
\end{itemize}
Smolyak's algorithm applied to this setting yields random elements in $H^{\bm{\reg}}_{\pa}(\domPS)$ that converge to the deterministic function $\goal\in H^{\bm{\reg}}_{\pa}(\domPS)$ in the probabilistic mean squared error (MSE) as $L\to\infty$.

Before we prove this convergence, we describe Smolyak's algorithm from a computational perspective.
If we denote the members of $Z^{(j)}_{N}$ by $(z^{(j)}_{i,N})_{i=1}^N$, then by \Cref{pro:pro} each $S^{(j)}_{N}$ can be written as
\begin{equation}
\label{eq:ugly}
S^{(j)}_{N}f=\sum_{i=1}^N s^{(j)}_{i,N} f(z^{(j)}_{i,N}),
\end{equation}
with $s^{(j)}_{i,N}\in H^{\reg_j}(\domPS^{(j)})$.
The value $\mathcal{M}(S^{(1)}_{N^{(1)}},\dots,S^{(n)}_{N^{(n)}},\op_{N^{(n+1)}},\tilde{\goal}_{N^{(n+2)}})$, which is formally defined as an element of $L^2(\Omega^{\N};H^{\bm{\reg}}_{\pa}(\domPS))$, is therefore given by
\begin{equation}
\label{eq:simple}
\sum_{\bm{i}\in \ind}s_{\bm{i}} \frac{1}{N^{(n+1)}}\sum_{k=1}^{N^{(n+1)}}\qoi\left(u_{(z_{\bm{i}},m(\omega_k)),N^{(\dt+2)}}\right)
\end{equation}
where
\begin{equation*}
\ind:=\{\bm{i} \in\N^{\dt}: 1\leq i_j\leq N^{(j)}\},
\end{equation*}
\begin{equation*}
s_{\bm{i}}:=\bigotimes_{j=1}^{\dt} s^{(j)}_{i_j,N^{(j)}},
\end{equation*}
and
\begin{equation*}
z_{\bm{i}}:=(z^{(1)}_{i_1,N^{(1)}},\dots,z^{(\dt)}_{i_{\dt},N^{(\dt)}}).
\end{equation*}
This means that we draw independent samples $(m(\omega_k))_{k=1}^{N^{(n+1)}}$ of $m$ and then form a kernel interpolant based on the averaged values $(\frac{1}{N^{(n+1)}}\sum_{k=1}^{N^{(n+1)}}\qoi\left(u_{(z_{\bm{i}},m(\omega_k)),N^{(\dt+2)}}\right))_{\bm{i}\in\ind}$. From the combination rule in \Cref{eq:combinationrule}, we see that Smolyak's algorithm is a linear combination of approximations as in \Cref{eq:simple}. 
\begin{thm}
\label{thm:OUU}
 Let $0\leq \bm{\regs}\leq  \bm{\reg}$. 
Assume that 
 \begin{equation*}
\sup_{m} \|\qoi(u_{(\cdot,m)})-\qoi(u_{(\cdot,m),N})\|_{H^{\bm{\reg}}_{\pa}(\domPS)}\lesssim_{\ns} \ns^{-\kappa}
 \end{equation*}
  and assume that each call of the PDE solver to obtain $u_{(z,m),N}$ given $z$ and $m$ requires the computational work $\tau\lesssim_{N}\ns^{\gamma}$. 
 For small enough $\epsilon>0$ we can choose $L$ such that the Smolyak algorithm $\smoll_L:=\smol$ with work parameters $(1,\dots,1,1,\gamma)$ and convergence parameters $((\reg_1-\regs_1)/d_1,\dots,(\reg_{\dt}-\regs_{\dt})/d_{\dt},1/2,\kappa)$ satisfies
\begin{equation}
\label{eq:boundOUU}
\E[\|\goal-\smoll_{L}\|_{H^{\bm{\regs}}_{\pa}(\dom)}^2]\leq \epsilon^2
\end{equation}
and such that the computational work required for all calls of the PDE solver is bounded by
\begin{equation}
\work(\smoll_L)\lesssim_{\epsilon}\epsilon^{-\rho}|\log\epsilon|^{(1+\rho)(\dt_0-1)},
\end{equation}
where 
\begin{align*}
\rho:=\max \{d_1/(\beta_1-\alpha_1),\dots,d_\dt/(\beta_\dt-\alpha_\dt),2,\gamma/\kappa\}\\
\intertext{and}
 \dt_0=|\argmax \{d_1/(\beta_1-\alpha_1),\dots,d_\dt/(\beta_\dt-\alpha_\dt),2,\gamma/\kappa\}|.
 \end{align*}
  By Chebyshev's inequality, for example, \Cref{eq:boundOUU} implies
  \begin{equation*}
  P(\|\goal-\smoll_{L}\|_{H^{\bm{\regs}}_{\pa}(\domPS)}\geq \delta)\leq \frac{\epsilon^2}{\delta^2}\quad\forall \delta>0.
  \end{equation*}
\end{thm}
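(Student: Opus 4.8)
The plan is to verify the four assumptions of \Cref{sec:sparse} for the $(n+2)$-linear map $\bl$ and then invoke \Cref{thm:central}. The key structural point is that the probabilistic MSE $\E[\|\cdot\|_{H^{\bm{\regs}}_{\pa}(\domPS)}^2]^{1/2}$ is exactly the norm of the Bochner space $L^2(\Omega^{\N};H^{\bm{\regs}}_{\pa}(\domPS))$, so \Cref{eq:boundOUU} is just the statement that $\|\goal-\smoll_L\|\leq\epsilon$ in that Bochner norm. Thus we work in the normed spaces $\factor^{(j)}:=\mathcal{L}(H^{\reg_j}(\domPS_j),H^{\regs_j}(\domPS_j))$ for $j\in\{1,\dots,\dt\}$, $\factor^{(\dt+1)}:=\mathcal{L}(L^2(\Omega;H^{\bm{\reg}}_{\pa}(\domPS)),L^2(\Omega^{\N};H^{\bm{\reg}}_{\pa}(\domPS)))$, and $\factor^{(\dt+2)}:=L^2(\Omega;H^{\bm{\reg}}_{\pa}(\domPS))$, with target $\vsgoal:=L^2(\Omega^{\N};H^{\bm{\regs}}_{\pa}(\domPS))$.

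First I would check Assumption 1: the map $\bl$ is (a restriction of) the composition of the tensor-product operator construction from \Cref{pro:cross} with operator composition and evaluation, all of which are continuous multilinear maps; explicitly, $\|\bl((A^{(j)})_j,\mathcal{E},X)\|\leq\bigl(\prod_{j=1}^\dt\|A^{(j)}\|\bigr)\|\mathcal{E}\|\,\|X\|$ since $\bigotimes_j A^{(j)}$ has operator norm $\prod_j\|A^{(j)}\|$ by \Cref{pro:cross}, the expectation operator is a contraction from $L^2(\Omega;\cdot)$ to $L^2(\Omega^{\N};\cdot)$ by Jensen, and we apply the resulting operator to $E[X]\in H^{\bm{\reg}}_{\pa}(\domPS)$ then re-embed. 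Next, Assumption 2: for the kernel factors, $\|\Id^{(j)}-S^{(j)}_N\|_{H^{\reg_j}(\domPS^{(j)})\to H^{\regs_j}(\domPS^{(j)})}\lesssim_N N^{-(\reg_j-\regs_j)/d_j}$ by \Cref{pro:sobolev} together with $h_{Z^{(j)}_N,\domPS^{(j)}}\lesssim_N N^{-1/d_j}$; for the expectation factor, the $L^2$ error of the empirical mean of $N$ i.i.d.\ square-integrable samples is the classical $N^{-1/2}$ Monte Carlo rate (applied to the centered random variable, uniformly in $z$ via the Bochner-space identity $\E\|\cdot\|_{H^{\bm{\reg}}_{\pa}(\domPS)}^2 = \int_\domPS \E|\cdot(z)|^2$, or directly since $\mathcal{E}_N - E$ acts as $\frac1N\sum(X(\omega_i)-E X)$ whose Bochner norm squared is $\frac1N \mathrm{Var}(X)$); for the PDE factor, $\|\tilde\goal - \tilde\goal_N\|_{L^2(\Omega;H^{\bm{\reg}}_{\pa}(\domPS))}\leq \sup_m\|\qoi(u_{(\cdot,m)})-\qoi(u_{(\cdot,m),N})\|_{H^{\bm{\reg}}_{\pa}(\domPS)}\lesssim_N N^{-\kappa}$ by hypothesis. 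This gives convergence exponents $((\reg_j-\regs_j)/d_j)_{j=1}^\dt$, $1/2$, and $\kappa$. Assumptions 3 and 4 are handled by assigning unit work to each point evaluation and to each Monte Carlo sample, work $N^\gamma$ to each call of the PDE solver, and to the full tensor-product evaluation the product $(\prod_{j=1}^\dt N^{(j)})\,N^{(\dt+1)}\,(N^{(\dt+2)})^\gamma$, which is exactly the number of PDE solves in \Cref{eq:simple}; this is multiplicative and subadditive as required, and corresponds to work exponents $(1,\dots,1,1,\gamma)$.

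With all four assumptions verified, \Cref{thm:central} applies verbatim with $\bm\beta=((\reg_1-\regs_1)/d_1,\dots,(\reg_\dt-\regs_\dt)/d_\dt,1/2,\kappa)$ and $\bm\gamma=(1,\dots,1,1,\gamma)$, yielding the claimed $\rho=\max\{d_j/(\reg_j-\regs_j),2,\gamma/\kappa\}$ and $\dt_0$, and the work bound $\epsilon^{-\rho}|\log\epsilon|^{(\dt_0-1)(1+\rho)}$ — here $\gamma_{\dt+1}/\beta_{\dt+1}=1/(1/2)=2$ accounts for the Monte Carlo exponent appearing in $\rho$. The final Chebyshev statement is immediate from \Cref{eq:boundOUU}. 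The main obstacle, and the only genuinely new ingredient beyond the pattern of \Cref{thm:main,thm:RSR}, is the careful treatment of the probabilistic factor: one must confirm that the empirical-mean operator $\mathcal{E}_N$ really does satisfy $\|\mathcal{E}_N-E\|\lesssim_N N^{-1/2}$ as an operator between the stated Bochner spaces (not merely pointwise in $z$), that the constant-random-variable embedding is isometric so that the target norm is the MSE, and that combining the independent draws used for $\mathcal{E}_N$ with the deterministic data used for the $S^{(j)}$ is consistent with the tensor-product structure — which is precisely what the explicit formula \Cref{eq:simple} and the combination rule \Cref{eq:combinationrule} make transparent.
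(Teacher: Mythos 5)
Your proposal is correct and follows essentially the same route as the paper: cast the problem as an $(n+2)$-linear map into the Bochner space $L^2(\Omega^{\N};H^{\bm{\regs}}_{\pa}(\domPS))$, verify the four assumptions of \Cref{sec:sparse} (continuity via \Cref{pro:cross}, the kernel rates via \Cref{pro:sobolev}, the $N^{-1/2}$ Monte Carlo rate for the empirical-mean operator, the $N^{-\kappa}$ rate for the PDE factor, and the multiplicative work assignment matching \Cref{eq:simple}), and then invoke \Cref{thm:central} with the stated parameters. The extra detail you supply on the Bochner-norm identification and the variance computation for $\emp_N-E$ only makes explicit what the paper dispatches as ``standard Monte Carlo theory.''
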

\begin{proof}
We check the assumptions of \Cref{sec:sparse} for the multilinear map 
\begin{equation*}
\begin{split}
\bl\colon\left(\prod_{j=1}^\dt \mathcal{L}(H^{\reg_j}(\domPS_j), H^{\regs_j}(\domPS_j))\right)&\times \mathcal{L}\left(L^2(\Omega;H^{\bm{\reg}}_{\pa}(\domPS)),L^2(\Omega^{\N};H^{\bm{\reg}}_{\pa}(\domPS))\right)\times L^2(\Omega,H^{\bm{\reg}}_{\pa}(\domPS))\to L^2(\Omega^{\N};H^{\bm{\regs}}_{\pa}(\domPS))\\
&\left((A^{(j)})_{j=1}^n,B,C\right)\mapsto \Big(\bigotimes_{j=1}^n A^{(j)}\Big)B[C]
\end{split}
\end{equation*}
 Assumption 1 on the continuity of $\bl$ follows from \Cref{pro:cross} together with the definition of operator norms:
\begin{equation*}
\begin{split}
\|\Big(\bigotimes_{j=1}^{\dt} A^{(j)}\Big)B[C]\|_{ L^2(\Omega^{\N};H^{\bm{\regs}}_{\pa}(\domPS))}& \leq \|\bigotimes_{j=1}^\dt A^{(j)}\|_{H^{\bm{\reg}}_{\pa}(\domPS)\to H^{\bm{\regs}}_{\pa}(\domPS)} \|B[C]\|_{L^2(\Omega^{\N};H^{\bm{\reg}}_{\pa}(\domPS))}\\
&\leq \prod_{j=1}^\dt \|A^{(j)}\|_{H^{\reg_j}(\domPS_j)\to H^{\regs_j}(\domPS_j)} \|B\|_{L^2(\Omega;H^{\bm{\reg}}_{\pa}(\domPS))\to L^2(\Omega^{\N};H^{\bm{\reg}}_{\pa}(\domPS)) }\|C\|_{L^2(\Omega;H^{\bm{\reg}}_{\pa}(\domPS))}.
\end{split}
\end{equation*}

Next, we check the convergence rates of the input approximations $S^{(j)}_{\ns}\to \Id^{(j)}$, $\op_{\ns}\to E$, and $\tilde{\goal}_{\ns}\to \tilde{\goal}$.
\begin{itemize}
\item By \Cref{pro:sobolev}, we have
\begin{equation*}
\|\Id^{(j)}-S^{(j)}_{\ns}\|_{H^{\reg_j}(\domPS^{(j)})\to H^{\regs_j}(\domPS^{(j)})}\lesssim_{N} N^{-(\reg_j-\regs_j)/d_j}.
\end{equation*}

\item By standard Monte Carlo theory, we have 
\begin{equation*}
\|E-\op_{\ns}\|_{L^2(\Omega;H^{\bm{\reg}}_{\pa}(\domPS))\to L^2(\Omega^{\N};H^{\bm{\reg}}_{\pa}(\domPS))}\lesssim_{N}N^{-1/2}.
\end{equation*}

\item We have
\begin{equation*}
\begin{split}
\|\tilde{\goal}-\tilde{\goal}_{\ns}\|_{L^2(\Omega;H^{\bm{\reg}}_{\pa}(\domPS))}&\leq \sup_{m} \|Q(u_{(\cdot,m)})-Q(u_{(\cdot,m),N})\|_{H^{\bm{\reg}}_{\pa}(\domPS)}\\
&\lesssim_{N} N^{-\kappa}
\end{split}
\end{equation*}
by assumption.
\end{itemize}

We associate as work with the operators $S^{(j)}_{\ns}$, $j\in\{1,\dots,\dt\}$, and $\op_{\ns}$ the number of required evaluations $\ns$, with $\tilde{v}_{\ns}$ the computational work $\ns^{\gamma}$ required per sample, and, to satisfy Assumption 4 of \Cref{sec:sparse}, with 
\begin{equation*}
\bl(S^{(1)}_{\ns^{(1)}},\dots,S^{(\dt)}_{\ns^{(\dt)}},\op_{\ns^{(\dt+1)}},\tilde{\goal}_{\ns^{(\dt+2)}})
\end{equation*}
the product 
 \begin{align*}
(\prod_{j=1}^{\dt+1}\ns^{(j)})(\ns^{(\dt+2)})^{\gamma},
\end{align*}
which is the computational work required for the calls of the PDE solver (cf. \Cref{eq:simple}).
\end{proof}

\begin{rem}\textbf{(Total work)}
\label{rem:linf}
By the same arguments as in the remark after \Cref{thm:RSR}, the work for the computation of the elements $s^{(j)}_{i,N}$ in \Cref{eq:ugly} is negligible when 
\begin{equation*}
\lambda \min_{j=1}^\dt \{1/(1+(\beta_{j}-\regs_j)/d_j))\}\leq \max\{2/3,\gamma/(\gamma+\kappa)\},
\end{equation*}
where $N^{\lambda}$ is the computational work required to solve linear systems of size $N$ in \Cref{pro:pro}.
\end{rem}
\begin{rem}\textbf{(Convergence in $L^\infty$)}
	 Analogously to the remark after \Cref{thm:spkernel}, the result also holds if we measure the error in $L^{\infty}(\domPS)$, if we change the convergence parameters to
\begin{equation*}
\left((\beta_1-d_1/2)/d_1,\dots,(\beta_\dt-d_\dt/2)/d_{\dt},1/2,\kappa\right)
\end{equation*}
and adapt $\rho$ and $n_0$ accordingly.

\end{rem}
\begin{rem}
	The recent work \cite{dereich2015general} describes a similar algorithm employing a multilevel approach to directly find minima without reconstruction of the complete response surface using the Robbins-Monro algorithm \cite{robbins1951stochastic}.
	\end{rem}
\section{Numerical Experiments}
\label{sec:numerics}
To support our theoretical analysis, we performed numerical experiments on two linear elliptic
PDEs. The computation times presented below were achieved using MATLAB’s Parallel Com-
puting Toolbox for parallel computations on two Intel Xeon X5650 (2.66Ghz) processors with a
combined number of 12 cores.
\subsection{Response surface approximation}
\label{ssec:numrsr}
We consider diffusion through a material with a number $\dt$ of uncertain bumps in the diffusion coefficient, described by the partial differential equation
\begin{equation}
\label{eq:PDEnum}
\begin{aligned}
-\nabla \cdot (a_y \nabla u_{y})&=1&\text{ in }(0,1)^2\\
u_{y}&=0&\text{ on } \partial (0,1)^2,
\end{aligned}
\end{equation}
where the parameter $y=(c_1,\dots,c_\dt)\in\domPS^{(1)}\times\dots\times\domPS^{(\dt)}\subset ([0,1]^2)^n$ describes the centers of the bumps. These affect the diffusion coefficient additively through
\begin{equation}
a_y(x)=2+\sum_{j=1}^\dt\phi(x-c_j)
\end{equation}
where $\phi\in C^2_c(\R^2,\Rnn)$ is defined by $\phi(x):=\phi_0(|x|/R)$ with $\phi_0(r):=\int_{0}^{(1-r)_{+}} s^2(1-s)^2ds$ and $R=0.25$ for $n=1$, $R=0.125$ otherwise. We are interested in the dependence of the spatial average
\begin{equation}
\label{eq:spatialaverage}
\fq(y):=\qoi(u_y):=\int_{(0,1)^2} u_y(x)\;dx
\end{equation}
on the locations of the bumps.

For our numerical experiments, we partitioned the variables according to the bumps they describe. This means that we used kernel-based approximations
 $S^{(j)}_{N}\colon H^{\regnum}(\domPS^{(j)})\to L^2(\domPS^{(j)})$, $j\in\{1,\dots,\dt\}$ based on evaluations in sets $Y^{(j)}_{N}\subset \domPS^{(j)}\subset[0,1]^2$ with $|Y^{(j)}_{N}|=N$ and $h_{Y^{(j)}_{N},\domPS^{(j)}}\lesssim_{N}N^{-1/2}$. The domains $\domPS^{(j)}\subset[0,1]^2$ were chosen such that the supports of $\phi(x-c_j)$, $c_j\in\domPS^{(j)}$ did not overlap, see \Cref{fig:domainRSR}.
 
 \begin{figure}
 	\centering
 	\begin{subfigure}[b]{0.3\textwidth}
 \begin{tikzpicture}
\draw (0,0) rectangle (4,4);
\fill[black!10!white] (1,1) rectangle (3,3);
\node at (2,2) {$\domPS^{(1)}$};
\end{tikzpicture}
 \caption{n=1}
 \end{subfigure}
 	\begin{subfigure}[b]{0.3\textwidth}
  \begin{tikzpicture}
\draw (0,0) rectangle (4,4);
\fill[black!10!white] (0.5,0.5) rectangle (1.5,3.5);
\node at (1,2) {$\domPS^{(1)}$};
\fill[black!10!white] (2.5,0.5) rectangle (3.5,3.5);
\node at (3,2) {$\domPS^{(2)}$};
\end{tikzpicture}
  \caption{n=2}
  \end{subfigure}
  	\begin{subfigure}[b]{0.3\textwidth}
    \begin{tikzpicture}
\draw (0,0) rectangle (4,4);
\fill[black!10!white] (0.5,0.5) rectangle (1.5,1.5);
\node at (1,1) {$\domPS^{(1)}$};
\fill[black!10!white] (2.5,0.5) rectangle (3.5,1.5);
\node at (3,1) {$\domPS^{(2)}$};
\fill[black!10!white] (0.5,2.5) rectangle (1.5,3.5);
\node at (1,3) {$\domPS^{(3)}$};
\fill[black!10!white] (2.5,2.5) rectangle (3.5,3.5);
\node at (3,3) {$\domPS^{(4)}$};
\end{tikzpicture}
    \caption{n=4} 
    \end{subfigure}
       
      \caption{Domain of the numerical example in \Cref{ssec:numrsr} and domains $\domPS^{(j)}$ of the centers $c_j$, $j\in\{1,\dots,\dt\}$.}
      \label{fig:domainRSR}
 \end{figure} 
 
 Furthermore, we approximated $u_y$ using the finite element method with continuous piecewise linear elements on a quasi-uniform mesh with maximal element size $h_{\text{max}}$ and $M\approx h_{\text{max}}^{-2}$ mesh points. Consequently, instead of sampling $q$, we relied on the approximations $q_{h_{\max}}(y):=Q(u_{y,h_{\max}})$.

  Differentiation of \Cref{eq:PDEnum} with respect to $y$ and subsequent calculations similar to but simpler than those in \cite{Kuo2012} show that
   \begin{equation}
   \begin{split}
    \|\partial_{\bm{\alpha}}\fq-\partial_{\bm{\alpha}}\fq_{h_{\text{max}}}\|_{L^2(\domPS)}\lesssim_{h_{\text{max}}} h_{\text{max}}^{2}
    \end{split}
    \end{equation}
     for all $\bm{\regs}$ such that $\alpha_{2j-1}+\alpha_{2j}\leq 2$, $j\in\{1,\dots,n\}$ (observe that under this condition on $\bm{\regs}$ the derivative $\partial_{\bm{\regs}} a_{y}$ with respect to $y$ exists). Therefore, we have
    \begin{equation}
        \|\fq-\fq_{h_{\text{max}}}\|_{H^{\bm{\regnum}}_{\pa}(\domPS)}\lesssim_{h_{\text{max}}}h_{\text{max}}^{2}\approx M^{-1},
        \end{equation}
        where $H^{\bm{\regnum}}_{D}(\domPS)=\bigotimes_{j=1}^n H^2(\domPS^{(j)})$.

   Finally, we verified experimentally that samples of $\fq_{h_{\text{max}}}$ required approximately the runtime $h_{\text{max}}^{-3}=M^{3/2}$. Consequentially, we applied Smolyak's algorithm from \Cref{sub:RSR} with work parameters $(1,\dots,1,3/2)$ and convergence parameters $(\regnum/2,\dots,\regnum/2,1)=(1,\dots,1,1)$.  \Cref{thm:RSR} shows that the runtime required for an error of size $\|\fq-\smoll_{L}(\fq)\|_{L^2(\domPS)}\leq \epsilon$ is asymptotically bounded by $\epsilon^{-3/2}$.
Observe in particular that the exponent is independent of the number of bumps $\dt$, whereas straightforward approximation would require the work $\epsilon^{-\dt-3/2}$.
  
 \Cref{fig:convergenceRSR} below shows convergence plots for $\dt\in\{1,2,4\}$, which exhibit the expected rate of convergence. The runtimes include the solution of the linear systems required for kernel-based approximation. Even though the associated work is not asymptotically negligible for the given work and convergence parameters, this did not affect the results in the ranges of $L$ we considered. For each $\dt\in\{1,2,4\}$, a reference solution was computed with Smolyak's algorithm using $L_{\max}\gg 1$ and the $L^2(\Gamma)$-distance to the results of Smolyak's algorithm with $n+1\leq L<L_{\max}$ was approximated using $10^4$ random evaluations.
\begin{figure}
	\centering
%
%
\definecolor{mycolor1}{rgb}{0.00000,0.44700,0.74100}%
\definecolor{mycolor2}{rgb}{0.85000,0.32500,0.09800}%
\pgfplotscreateplotcyclelist{my black white}{%
	loosely dashed, every mark/.append style={solid, fill=gray}, mark=*\\%
	densely dashed, every mark/.append style={solid, fill=gray}, mark=square*\\%
	dashdotted, every mark/.append style={solid, fill=gray}, mark=triangle*\\%
	dashed, every mark/.append style={solid, fill=gray},mark=diamond*\\%
	loosely dashed, every mark/.append style={solid, fill=gray},mark=*\\%
	densely dashed, every mark/.append style={solid, fill=gray},mark=square*\\%
	dashdotted, every mark/.append style={solid, fill=gray},mark=otimes*\\%
	dasdotdotted, every mark/.append style={solid},mark=star\\%
	densely dashdotted,every mark/.append style={solid, fill=gray},mark=diamond*\\%
}
\begin{tikzpicture}

\begin{axis}[%
width=4.521in,
height=3.566in,
at={(0.758in,0.481in)},
scale only axis,
xmode=log,
xmin=0.1,
xmax=1000,
xminorticks=true,
xlabel={Runtime (s)},
ymode=log,
ymin=1e-05,
ymax=0.01,
yminorticks=true,
ylabel={Error ($L^2(\Gamma)$)},
axis background/.style={fill=white},
scale=0.6,
cycle list name=my black white
]
\addplot+[]
  table[row sep=crcr]{%
0.180149931824674	0.00523977493805551\\
0.243290831862168	0.00192352704599864\\
0.329372286556565	0.00338232346490313\\
0.337826642963775	0.00275037981662938\\
0.458810708184196	0.00523977493805551\\
0.516235428534473	0.00247889970302947\\
0.764487160634312	0.00255718575241824\\
0.828232438212983	0.000897020339534786\\
1.05560966295751	0.000556618113209411\\
1.57867725238421	0.000453915960935669\\
2.52048955350743	0.000246203346459327\\
4.53301980125603	0.000275422997131164\\
6.58195859898357	0.000160579291911704\\
10.7272709774825	0.000100589396027051\\
17.2624022505942	5.57107738757943e-05\\
28.928974860204	4.70325063402668e-05\\
48.1711874363383	1.64448669141017e-05\\
81.1448789204662	1.15323536363487e-05\\
};
\addplot+[]
table[row sep=crcr]{%
0.151045881109314	0.00730333096544007\\
0.178815296834063	0.00730333096544008\\
0.249592141398168	0.00227065835749392\\
0.312811187066783	0.00179533225718381\\
0.542123959758465	0.00293829003765648\\
0.672276085820071	0.0016437334979963\\
1.13376376306245	0.00191886948807182\\
2.2102602518984	0.00117198491034611\\
2.37306416333746	0.00071911020540633\\
4.98209233195032	0.000467883122284005\\
7.51320661165933	0.00030670717393757\\
14.9714398890717	0.000508106694528483\\
26.2796801341487	0.000197415826145638\\
49.0725795834337	0.000132963055766953\\
74.7307652035597	0.000137331500597793\\
128.451008078641	6.89909567261404e-05\\
236.057771178813	4.09975450698723e-05\\
};

\addplot+[]
table[row sep=crcr]{%
0.221046658716087	0.00909428287070589\\
0.307839123167523	0.00909428287070589\\
0.517837991403211	0.00218240376993264\\
0.751647409992016	0.00185078622741262\\
1.25261672378099	0.00124405601098679\\
2.18013722834769	0.000529139574664358\\
3.76226559197818	0.00182704212620214\\
7.01455587489616	0.00164372350535247\\
15.6196024658604	0.00170943563290494\\
29.3935360089679	0.000822660062181611\\
55.7455062412567	0.000572613670607768\\
116.830164906529	0.000250352456070469\\
230.264069870663	0.000188300530007596\\
469.294394994984	8.355366732799e-05\\
};
\addplot [color=mycolor2,solid]
table[row sep=crcr]{%
	0.15  0.0070834390
	0.445237446318393	0.00343006386939841\\
	8.83907641113316	0.000467834311355433\\
	17.2329153759479	0.00029977183433457\\
	25.6267543407627	0.000230091685342141\\
	34.0205933055775	0.000190488490431948\\
	42.4144322703922	0.000164445082174835\\
	50.808271235207	0.000145794245779252\\
	59.2021102000218	0.00013166546310175\\
	67.5959491648365	0.000120526593449001\\
	75.9897881296513	0.000111479027961758\\
	84.3836270944661	0.000103957988992388\\
	92.7774660592809	9.75891775093615e-05\\
	101.171305024096	9.21138838913963e-05\\
	109.56514398891	8.73470790537403e-05\\
	117.958982953725	8.31526286294459e-05\\
	126.35282191854	7.9427952089126e-05\\
	134.746660883355	7.60941589208973e-05\\
	143.140499848169	7.3089497495666e-05\\
	151.534338812984	7.03648808741522e-05\\
	159.928177777799	6.78807558796765e-05\\
	168.322016742614	6.56048648276327e-05\\
	176.715855707429	6.35106148193281e-05\\
	185.109694672243	6.15758694358012e-05\\
	193.503533637058	5.97820397298371e-05\\
	201.897372601873	5.81133909457469e-05\\
	210.291211566688	5.65565071537168e-05\\
	218.685050531502	5.50998731151365e-05\\
	227.078889496317	5.37335443008143e-05\\
	235.472728461132	5.24488839811811e-05\\
	243.866567425947	5.12383519044914e-05\\
};
\addlegendentry{$\dt=1$}
\addlegendentry{$\dt=2$}
\addlegendentry{$\dt=4$}
\end{axis}
\end{tikzpicture}%
\caption{Convergence of Smolyak's algorithm for response surface reconstruction. Expected slope: $-2/3$ (red).}
\label{fig:convergenceRSR}
\end{figure} 
 
The vertical shifts in the convergence plots of Figure\nobreakspace \ref{fig:convergenceRSR} represent growth of constants with respect to $n$, which is hidden in the notation $\lesssim_{\epsilon}$ in Theorem\nobreakspace \ref{thm:central}.

\subsection{Optimization under uncertainty}

We consider an advection-diffusion problem with a deterministic source term $f$, user-controlled velocity $z\in B_1(0)\subset\R^2$, and random diffusion coefficient  $a_m:=1+\exp(-m)$, where $m\sim \mathcal{N}(0,k)$ is a centered Gaussian random field on $[0,1]^2$ with covariance $k(x,y)=\exp(-(10|x-y|)^2)$. We are interested in the spatial average $\qoi(u_{(z,m)}):=\int_{(0,1)^2}u_{(z,m)}$ of the solution of 
\begin{align*}
-\nabla \cdot (a_m \nabla u_{(z,m)})&=-z\cdot \nabla u_{(z,m)} +f&\text{in }(0,1)^2\\
\partial_{\mathbf{n}}\nabla u_{(z,m)}+u_{(z,m)}&=u_b&\text{ on }\partial(0,1)^2\\
\end{align*}
where
\begin{align*}
u_b:=\begin{cases}
1&\text{ on }\{0\}\times [0,1]\\
0&\text{ on }\{1\}\times [0,1]\\
\frac{1+\cos(\pi x_1)}{2}&\text{ on }[0,1]\times \{0\}\\
\exp(1-1/(1-x_1))&\text{ on }[0,1]\times \{1\}.
\end{cases}
\end{align*}
and $f(x):=20\exp(-|x-(0.5,0.5)|_2^2)$.

 Our goal is to minimize the expected value of the spatial average plus a quadratic penalty term for large velocities that accounts for expensive power consumption required for the generation of large velocities:
\begin{equation*}
\min_{z\in B_1(0)} \phi(z),
\end{equation*}
where 
\begin{equation*}
\phi(z):=v(z)+\psi(z):=E_m[\int_{(0,1)^2}(u_{(z,m)})(x)\,dx]+\frac{|z|^2}{10}.
\end{equation*}

For our numerical experiments, we used kernel-based approximations $S_N\colon H^{4}(B_1(0))\to L^{\infty}(B_1(0))$ based on evaluations in $Y_{N}\subset B_1(0)$ with $h_{Y_N,B_1(0)}\lesssim_{N} N^{-1/2}$, satisfying
 \begin{equation}
 \|S_N-\Id\|_{H^4(B_1(0))\to L^{\infty}(B_1(0))}\lesssim_{N}h_{Y_N,B_1(0)}^{3}\lesssim_{N}N^{-3/2}.
 \end{equation}
Furthermore, we used finite element approximations of $u_{(z,m)}$ with maximal element size $h_{\text{max}}$ and $M\approx h_{\text{max}}^{-2}$ mesh points. The finite element approximations converged at the rate $h_{\text{max}}^2\approx M^{-1}$ and required the computational work $M^{3/2}$. Consequentially, we applied Smolyak's algorithm from \Cref{thm:OUU} (using \Cref{rem:linf}) with work parameters $(1,1,3/2)$ and convergence parameters $(3/2,1/2,1)$.

 \Cref{fig:diffestim} below shows that, as predicted by the theory, the runtime required to achieve the bound 
 \begin{equation}
 \label{eq:msqOUU}
 E[\|\goal-\smoll_L(\goal)\|_{L^{\infty}(B_1(0))}^2]\lesssim_{\epsilon} \epsilon^2,
 \end{equation}
 is bounded up to constants by $\epsilon^{-2}$, which is an essential improvement on the work $\epsilon^{-2/3-3/2-2}$ that a straightforward approximation would require for the same bound.
 A reference solution was computed with Smolyak's algorithm using $L_{\max}:=15$ and the mean-squared $L^\infty(B_1(0))$-distance to the results of Smolyak's algorithm with $ 3\leq L<L_{\max}$ was approximated using $10^4$ evaluations in $B_1(0)$ and $20$ stochastic repetitions.
 
Using the surrogate model obtained by Smolyak's algorithm with $L=15$, we obtain the optimal velocity $z^*\approx (-0.451,-0.062)$ with $\goal(z^*)\approx 5.038$ and $\phi(z^*)\approx 5.059$.

\begin{figure}[b!]
\centering
%
%
\definecolor{mycolor1}{rgb}{0.00000,0.44700,0.74100}%
\definecolor{mycolor2}{rgb}{0.85000,0.32500,0.09800}%
\pgfplotscreateplotcyclelist{my black white}{%
	loosely dashed, every mark/.append style={solid, fill=gray}, mark=*\\%
	densely dashed, every mark/.append style={solid, fill=gray}, mark=square*\\%
	dashdotted, every mark/.append style={solid, fill=gray}, mark=triangle*\\%
	dashed, every mark/.append style={solid, fill=gray},mark=diamond*\\%
	loosely dashed, every mark/.append style={solid, fill=gray},mark=*\\%
	densely dashed, every mark/.append style={solid, fill=gray},mark=square*\\%
	dashdotted, every mark/.append style={solid, fill=gray},mark=otimes*\\%
	dasdotdotted, every mark/.append style={solid},mark=star\\%
	densely dashdotted,every mark/.append style={solid, fill=gray},mark=diamond*\\%
}
\begin{tikzpicture}

\begin{axis}[%
width=4.521in,
height=3.566in,
at={(0.758in,0.481in)},
scale only axis,
xmode=log,
xminorticks=true,
xlabel={Runtime (s)},
ymode=log,
yminorticks=true,
ylabel={Error (MSE)},
axis background/.style={fill=white},
cycle list name=my black white,
scale=0.8
]
\addplot+
  table[y expr=\thisrowno{1}^2,row sep=crcr]{
0.54773504704521	0.360650395489588\\
0.878236018283768	0.348865231427516\\
1.41751478415253	0.334007249499424\\
2.39596891728651	0.0895695713051015\\
4.64957073796423	0.0813898191897779\\
10.2551924499772	0.0880234357367589\\
20.4849313665785	0.0637635122046947\\
41.8899871194443	0.0429247039834979\\
91.2006664321015	0.0335913740512899\\
181.394944940051	0.0210503171158568\\
377.135811717533	0.018921512415869\\
798.893758897246	0.0149818898756054\\
};
\addplot [color=mycolor2,solid,forget plot]
  table[y expr=\thisrowno{1}^2,row sep=crcr]{
0.475028053716853	0.458266591997893\\
28.4969302692818	0.0591668961860585\\
56.5188324848468	0.0420127622317703\\
84.5407347004118	0.0343514298056074\\
112.562636915977	0.0297701277308319\\
140.584539131542	0.0266384558482751\\
168.606441347107	0.0243243221278562\\
196.628343562672	0.0225244796505104\\
224.650245778237	0.0210729034912554\\
252.672147993802	0.0198700582860689\\
280.694050209367	0.0188521646305296\\
308.715952424932	0.0179762165619937\\
336.737854640497	0.0172120205882368\\
364.759756856062	0.0165376717853647\\
392.781659071627	0.0159368405363821\\
420.803561287192	0.0153970714075035\\
448.825463502757	0.0149086762259097\\
476.847365718322	0.0144639906818341\\
504.869267933886	0.0140568612962114\\
532.891170149451	0.0136822828789224\\
560.913072365016	0.0133361369435054\\
588.934974580581	0.0130149994517343\\
616.956876796146	0.012715997168693\\
644.978779011711	0.0124366987401125\\
673.000681227276	0.01217503098967\\
701.022583442841	0.011929213813446\\
729.044485658406	0.0116977089776517\\
757.066387873971	0.0114791794417153\\
785.088290089536	0.0112724567416927\\
813.110192305101	0.0110765146119631\\
};
\end{axis}
\end{tikzpicture}%
\caption{Convergence of Smolyak's algorithm for optimization under uncertainty. Expected slope: $-1/2$ (red).}
\label{fig:diffestim}
\end{figure}
\section{Conclusion}

We have presented a framework for the sparse approximation of multilinear problems using Smolyak's algorithm and have shown complexity bounds that are, up to logarithmic factors, independent of the number of inputs. We have demonstrated how this framework can be used to obtain and analyze fast kernel-based algorithms for a number of problems in uncertainty quantification. In particular, for the problem of high-dimensional approximation, our framework generalizes results on sparse wavelet approximation from \cite{GriebelHarbrecht2013} to different approximation schemes, as we have illustrated for the case of kernel-based approximation. Furthermore, our results permit a general analysis of multilevel algorithms, extending in this respect the work of \cite{harbrecht2013multilevel}. Finally, we believe that our analysis may be helpful for the analysis of more general numerical approximation problems, where discretization parameters do not correspond to the number of interpolation nodes or basis functions. The generality of our arguments may also help in designing general purpose software that can be used to accelerate existing numerical implementations in a non-intrusive fashion.
\paragraph{Acknowledgement} S. Wolfers and R. Tempone are members of the KAUST Strategic Research Initiative, Center for Uncertainty Quantification in Computational Sciences and Engineering. R. Tempone received support from the KAUST CRG3 Award Ref: 2281. F. Nobile received support from the Center for ADvanced MOdeling Science (CADMOS).
\clearpage
\end{document}